\documentclass[11pt]{amsart}

\newtheorem{theorem}{Theorem}[section]

\newtheorem{corollary}[theorem]{Corollary}

\newtheorem{definition}[theorem]{Definition}
\newtheorem{lemma}[theorem]{Lemma}

\newtheorem{proposition}[theorem]{Proposition}
\newtheorem{remark}[theorem]{Remark}

\usepackage[colorlinks, bookmarks=true]{hyperref}
\usepackage{color,graphicx,shortvrb}
\usepackage[active]{srcltx} 


\def\11{\textbf{$1$}}

\usepackage{enumerate}
\usepackage{amsmath}
\usepackage{amssymb}
\usepackage[latin 1]{inputenc}

\begin{document}

\title{Weak 2-local derivations on $\mathbb{M}_n$}

\author[Niazi]{Mohsen Niazi}
\address{Department of Mathematics, Faculty of Mathematical and Statistical Sciences, University of Birjand, Birjand, Iran.}
\curraddr{Departamento de An{\'a}lisis Matem{\'a}tico, Facultad de
Ciencias, Universidad de Granada, 18071 Granada, Spain.}
\email{niazi@birjand.ac.ir}

\author[Peralta]{Antonio M. Peralta}
\address{Departamento de An{\'a}lisis Matem{\'a}tico, Facultad de
Ciencias, Universidad de Granada, 18071 Granada, Spain.}
\curraddr{Visiting Professor at Department of Mathematics, College of Science, King Saud University, P.O.Box 2455-5, Riyadh-11451, Kingdom of Saudi Arabia.}
\email{aperalta@ugr.es}

\thanks{Authors partially supported by the Spanish Ministry of Science and Innovation, D.G.I. project no. MTM2011-23843, and Junta de Andaluc\'{\i}a grant FQM375. Second author partially supported by the Deanship of Scientific Research at King Saud University (Saudi Arabia) research group no. RG-1435-020. The first author acknowledges the partial financial support from the IEMath-GR program for visits of young talented researchers.}

\subjclass[2000]{Primary 47B49, 15A60, 16W25, 47B48  Secondary 15A86; 47L10.}


\begin{abstract} We introduce the notion of weak-2-local derivation (respectively, $^*$-derivation) on a C$^*$-algebra $A$ as a (non-necessarily linear) map  $\Delta : A\to A$ satisfying that for every $a,b\in A$ and $\phi\in A^*$ there exists a derivation (respectively, a $^*$-derivation) $D_{a,b,\phi}: A\to A$, depending on $a$, $b$ and $\phi$, such that $\phi \Delta (a) = \phi D_{a,b,\phi} (a)$ and $\phi \Delta (b) = \phi D_{a,b,\phi} (b)$. We prove that every weak-2-local $^*$-derivation on $M_n$ is a linear derivation. We also show that the same conclusion remains true for weak-2-local $^*$-derivations on finite dimensional C$^*$-algebras.
\end{abstract}

\keywords{weak-2-local derivations, weak-2-local $^*$-derivations, finite dimensional C$^*$-algebras}

\maketitle
 \thispagestyle{empty}

\section{Introduction and preliminaries}

``Derivations appeared for the first time at a fairly early stage in the young
field of C$^*$-algebras, and their study continues to be one of the central branches in the field'' (S. Sakai, 1991 \cite[Preface]{Sak91}). We recall that \emph{derivation} from an associative algebra $A$ into an $A$-bimodule $X$ is a linear mapping $D: A\to X$ satisfying $$D(a b) = D(a) b +a D(b), \ \ (a,b\in A).$$ If $A$ is a C$^*$-algebra and $D$ is a derivation on $A$ satisfying $D(a^*) = D(a)^*$ ($a\in A$), we say that $D$ is \emph{$^*$-derivation} on $A$.\smallskip

Some of the earliest, remarkable contributions on derivations are due to Sakai. For example, a celebrated result due to him shows that every derivation on a C$^*$-algebra is continuous \cite{Sak60}. A subsequent contribution proves that every derivation on a von Neumann algebra $M$ is inner, that is, for every derivation $D$ on $M$ there exists ${a}\in M$ satisfying $D(x)= [{a},x] = {a} x- x {a},$ for
every $x\in M$ (cf. \cite[Theorem 4.1.6]{Sak}).\smallskip

We recall that, accordingly to the definition introduced by R.V. Kadison in \cite{Kad90}, a linear mapping $T$ from a Banach algebra $A$ into a $A$-bimodule $X$ is said to be a \emph{local derivation} if for every $a$ in $A$, there exists a derivation $D_{a}: A\to X$, depending on $a$, such that $T(a)= D_{a} (a)$. The contribution due to Kadison establishes that every continuous local derivation from a von Neumann algebra $M$ into a dual $M$-bimodule $X$ is a derivation. B.E. Johnson proves in \cite{John01} that every local derivation from a C$^*$-algebra $A$ into a Banach $A$-bimodule is a derivation.\smallskip

A very recent contribution, due to A. Ben Ali Essaleh, M.I. Ramírez and the second author of this note, establishes a new characterization of derivations on a C$^*$-algebra $A$, in weaker terms than those in the definition of local derivations given by Kadison (cf. \cite{BenAliPeraltaRamirez}). A linear mapping $T: A\to A$ is a \emph{weak-local derivation} if for every $a\in A$ and every $\phi\in A^{*},$ there exists a derivation $D_{a,\phi}: A\to A$, depending on $a$ and $\phi$, satisfying $\phi T (a) = \phi D_{a,\phi} (a)$ (cf. \cite[Definition 1.1 and page 3]{BenAliPeraltaRamirez}). Theorem 3.4 in \cite{BenAliPeraltaRamirez} shows that every weak-local derivation on a C$^*$-algebra is a derivation.\smallskip

When in the definition of local derivation we relax the condition concerning linearity but we assume \emph{locality} at two points, we find the notion of 2-local derivation introduced by P. \v{S}emrl in \cite{Semrl97}. Let $A$ be a Banach algebra. A (non-necessarily linear) mapping $\Delta: A\to A$ is said to be a \emph{2-local derivation} if for every $a,b\in A$ there exists a derivation $D_{a,b}: A\to A$, depending on $a$ and $b$, satisfying $\Delta (a) = D_{a,b} (a)$ and $\Delta (b) = D_{a,b} (b)$.  \v{S}emrl proves in \cite[Theorem 2]{Semrl97} that for an infinite-dimensional separable Hilbert space $H$, every 2-local derivation on the algebra $B(H)$ of all linear bounded operators on $H$ is linear and a derivation. S.O. Kim and J.S. Kim gave in \cite{KimKim04} a short proof of the fact that every 2-local derivation on $M_n$, the algebra of $n \times n$ matrices over the complex numbers, is a derivation. In a recent contribution, S. Ayupov and K. Kudaybergenov prove that every 2-local derivation on an arbitrary von Neumann algebra is a derivation (see \cite{AyuKuday2014}).\smallskip

In this note we introduce the following new class of mappings on C$^*$-algebras:

\begin{definition}\label{def weak-2-local} Let $A$ be a C$^*$-algebra, a {\rm(}non-necessarily linear{\rm)} mapping $\Delta: A\to A$ is said to be a weak-2-local derivation {\rm(}respectively, a weak-2-local $^*$-derivation{\rm)} on $A$ if for every $a,b\in A$ and $\phi\in A^*$ there exists a derivation {\rm(}respectively, a $^*$-derivation{\rm)} $D_{a,b,\phi}: A\to A$, depending on $a$, $b$ and $\phi$, such that $\phi \Delta (a) = \phi D_{a,b,\phi} (a)$ and $\phi \Delta (b) = \phi D_{a,b,\phi} (b)$.
\end{definition}

The main result of this paper (Theorem \ref{t weak-2-local derivations on Mn are derivations}) establishes that every (non-necessarily linear) weak-2-local $^*$-derivation on $M_n$ is a linear $^*$-derivation. We subsequently prove that every weak-2-local $^*$-derivation on a finite dimensional C$^*$-algebra is a linear $^*$-derivation. These results deepen on our knowledge about derivations on C$^*$-algebras and the excellent behavior that these operators have in the set of all maps on a finite dimensional C$^*$-algebra.\smallskip

As in previous studies on 2-local derivations and $^*$-homomorphisms (cf. \cite{AyuKuday2014, KOPR2014, BurFerGarPe2014RACSAM, BurFerGarPe2014JMAA} and \cite{AyuKudPe2014}), the techniques in this paper rely on the Bunce-Wright-Mackey-Gleason theorem \cite{BuWri92}, however, certain subtle circumstances and pathologies, which are intrinsical to the lattice $\mathcal{P} (M_n)$ of all projections in $M_n,$ increase the difficulties with respect to previous contributions. More concretely, the just mentioned Bunce-Wright-Mackey-Gleason theorem asserts that every bounded, finitely additive (vector) measure on the set of projections of a von Neumann algebra $M$ with no direct summand of Type $I_2$ extends (uniquely) to a bounded linear operator defined on $M$. Subsequent improvements due to S.V. Dorofeev and A.N. Sherstnev establish that every completely additive measure on the set of projections of a von Neumann algebra with no type $I_n$ ($n<\infty$) direct summands is bounded (\cite{Doro,Shers2008}). In the case of $M_n$, there exist completely additive measures on $\mathcal{P} (M_n)$ which are unbounded (see Remark \ref{r non boundedness of completely additive measure on PMn}). We establish a new result on non-commutative measure theory by proving that every weak-2-local $^*$-derivation on $M_n$ ($n\in \mathbb{N}$) is bounded on the set $\mathcal{P}(M_n)$ (see Proposition \ref{p boundedness of Delta}). This result shows that under a weak algebraic hypothesis we obtain an analytic implication, which provides the necessary conditions to apply the Bunce-Wright-Mackey-Gleason theorem.\smallskip

In this paper we also prove that every weak-2-local derivation on $M_2$ is a linear derivation. Numerous topics remain to be studied after this first answers. Weak-2-local derivations on $M_n$ and weak-2-local ($^*$-)derivations on von Neumann algebras and C$^*$-algebras should be examined.

\section{General properties of weak-2-local derivations}

Let $A$ be a C$^*$-algebra. Henceforth, the symbol $A_{sa}$ will denote the self-adjoint part of $A$. It is clear, by the Hahn-Banach theorem, that every weak-2-local derivation $\Delta$ on $A$ is 1-homogeneous, that is, $\Delta (\lambda a) = \lambda \Delta (a),$ for every $\lambda\in \mathbb{C}$, $a\in A.$\smallskip

We observe that the set Der$(A)$, of all derivations on $A$, is a closed subspace of the Banach space $B(A)$. This fact can be applied to show that a mapping $\Delta : A \to A$ is a weak-2-local derivation if and only if for any set $V\subseteq A^*$, whose linear span is $A^*$, the following property holds: for every $a,b\in A$ and $\phi\in V$ there exists a derivation $D_{a,b,\phi}: A\to A$, depending on $a$, $b$ and $\phi$, such that $\phi \Delta (a) = \phi D_{a,b,\phi} (a)$ and $\phi \Delta (b) = \phi D_{a,b,\phi} (b)$. This result guarantees that in Definition \ref{def weak-2-local} the set $A^*$ can be replaced, for example, with the set of positive functionals on $A$.\smallskip

Let $\Delta : A\to A$ be a mapping on a C$^*$-algebra. We define a new mapping $\Delta^{\sharp} : A\to A$ given by $\Delta^{\sharp} (a) := \Delta (a^*)^*$ ($a\in A$). Clearly, $\Delta^{\sharp \sharp} = \Delta$. It is easy to see that $\Delta$ is linear (respectively a derivation) if and only if $\Delta^{\sharp}$ is linear (respectively, a derivation). We also know that $\Delta (A_{sa}) \subseteq A_{sa}$ whenever $\Delta^{\sharp} = \Delta$.\smallskip

Let $A$ be a C$^*$-algebra. A  mapping $\Delta: A\to A$ is said to be a \emph{weak-2-local $^*$-derivation} on $A$ if for every $a,b\in A$ and $\phi\in A^*$ there exists a $^*$-derivation $D_{a,b,\phi}: A\to A$, depending on $a$, $b$ and $\phi$, such that $$\phi \Delta (a) = \phi D_{a,b,\phi} (a) \hbox{ and }\phi \Delta (b) = \phi D_{a,b,\phi} (b).$$

Clearly, every weak-2-local $^*$-derivation $\Delta$ on $A$ is a weak-2-local derivation and $\Delta^{\sharp} = \Delta$. However, we do not know if every weak-2-local derivation with $\Delta^{\sharp} = \Delta$ is a weak-2-local $^*$-derivation. Anyway, for a weak-2-local derivation $\Delta: A\to A$ with $\Delta^{\sharp} = \Delta,$ the mapping $\Delta|_{A_{sa}} : A_{sa} \to A_{sa}$ is a weak-2-local Jordan derivation, that is, for every $a,b\in A_{sa}$ and $\phi\in (A_{sa})^*,$ there exists a Jordan $^*$-derivation $D_{a,b,\phi}: A_{sa}\to A_{sa}$, depending on $a$, $b$ and $\phi$, such that $$\phi \Delta (a) = \phi D_{a,b,\phi} (a) \hbox{ and }\phi \Delta (b) = \phi D_{a,b,\phi} (b).$$ To see this, let $a,b\in A_{sa}$ and $\phi\in (A_{sa})^*$, by assumptions, there exists a derivation $D_{a,b,\phi}: A\to A$, depending on $a$, $b$ and $\phi$, such that $\phi \Delta (a) = \phi D_{a,b,\phi} (a)$ and $\phi \Delta (b) = \phi D_{a,b,\phi} (b)$. Since $\phi \Delta (a) = \phi \Delta (a)^* = \phi D_{a,b,\phi}^{\sharp} (a)$ and $\phi \Delta (b) = \phi D_{a,b,\phi}^{\sharp} (b)$, we get $$\phi \Delta (a) = \phi \frac12 \left(D_{a,b,\phi}-D_{a,b,\phi}^{\sharp}\right) (a), \hbox{ and } \phi \Delta (b) = \phi \frac12 \left(D_{a,b,\phi}-D_{a,b,\phi}^{\sharp}\right) (b),$$ where $\frac12 \left(D_{a,b,\phi}-D_{a,b,\phi}^{\sharp}\right)$ is a $^*$-derivation on $A.$\smallskip

The following properties can be also deduced from the fact stated in the second paragraph of this section.

\begin{lemma}\label{l sharp} Let $A$ be a C$^*$-algebra. The following statements hold:
\begin{enumerate}[$(a)$] \item The linear combination of weak-2-local derivations on $A$ is a weak-2-local derivation on $A$;
\item A mapping $\Delta : A\to A$ is a weak-2-local derivation if and only if $\Delta^{\sharp}$ is a weak-2-local derivation;
\item A mapping $\Delta : A\to A$ is a weak-2-local derivation if and only if $\Delta_s=\frac12 (\Delta+ \Delta^{\sharp})$ and $\Delta_a= \frac{1}{2 i} (\Delta- \Delta^{\sharp})$ are weak-2-local derivations. Clearly, $\Delta$ is linear if and only if both $\Delta_s$ and $\Delta_a$ are.
\end{enumerate}
\end{lemma}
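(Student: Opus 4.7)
The plan is to exploit, in each of the three parts, the fact recorded in the second paragraph of Section~2, namely that $\mathrm{Der}(A)$ is a (closed) linear subspace of $B(A)$; any linear combination of the derivations witnessing the weak-2-local property will again serve as a witness, provided the functional side is bookkept correctly.

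For part~(a), given weak-2-local derivations $\Delta_{1},\dots,\Delta_{n}$ on $A$ and scalars $\lambda_{1},\dots,\lambda_{n}\in\mathbb{C}$, I fix $a,b\in A$ and $\phi\in A^{*}$, and for each $k$ extract a derivation $D^{k}_{a,b,\phi}:A\to A$ with $\phi\Delta_{k}(a)=\phi D^{k}_{a,b,\phi}(a)$ and $\phi\Delta_{k}(b)=\phi D^{k}_{a,b,\phi}(b)$. Then $D:=\sum_{k}\lambda_{k}D^{k}_{a,b,\phi}$ is a derivation and, applying $\phi$, directly witnesses the weak-2-local property for $\sum_{k}\lambda_{k}\Delta_{k}$.

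For part~(b), the key observation is that the assignment $\phi\mapsto\phi^{\sharp}$, where $\phi^{\sharp}(x):=\overline{\phi(x^{*})}$, is a (linear) self-map of $A^{*}$, and that for \emph{any} map $T:A\to A$ one has the identity $\phi^{\sharp}T(x^{*})=\overline{\phi\, T^{\sharp}(x)}$. Assuming $\Delta$ is a weak-2-local derivation, for arbitrary $a,b\in A$ and $\phi\in A^{*}$ I would apply the defining property of $\Delta$ to the triple $(a^{*},b^{*},\phi^{\sharp})$, producing a derivation $D$ with $\phi^{\sharp}\Delta(a^{*})=\phi^{\sharp}D(a^{*})$ and $\phi^{\sharp}\Delta(b^{*})=\phi^{\sharp}D(b^{*})$. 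Conjugating both equalities and using the identity above (once with $T=\Delta$ and once with $T=D$) yields $\phi\Delta^{\sharp}(a)=\phi D^{\sharp}(a)$ and $\phi\Delta^{\sharp}(b)=\phi D^{\sharp}(b)$; since $D^{\sharp}$ is again a derivation (as noted in the preceding paragraphs), $\Delta^{\sharp}$ is a weak-2-local derivation. The converse direction follows from $\Delta^{\sharp\sharp}=\Delta$.

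For part~(c), combining (a) and (b) gives at once that $\Delta_{s}=\tfrac{1}{2}(\Delta+\Delta^{\sharp})$ and $\Delta_{a}=\tfrac{1}{2i}(\Delta-\Delta^{\sharp})$ are weak-2-local derivations whenever $\Delta$ is. Conversely, the algebraic identity $\Delta=\Delta_{s}+i\Delta_{a}$ together with (a) recovers $\Delta$ from $\Delta_{s}$ and $\Delta_{a}$; the same identity makes the equivalence of linearity statements transparent. The only genuine obstacle across the three parts is the careful bookkeeping of complex conjugations in (b) -- in particular, verifying that $\phi\mapsto\phi^{\sharp}$ is linear (not conjugate-linear) in the functional -- after which everything reduces to the linear-subspace property of $\mathrm{Der}(A)$.
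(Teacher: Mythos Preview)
Your argument is correct and follows exactly the paper's route: for (a) take the same linear combination of the witnessing derivations, for (b) pass from $\phi$ to $\phi^{\sharp}(x)=\overline{\phi(x^{*})}$, apply the hypothesis at $(a^{*},b^{*},\phi^{\sharp})$, and conjugate back using $D^{\sharp}\in\mathrm{Der}(A)$, and (c) follows formally from (a) and (b). One small slip in your commentary: the assignment $\phi\mapsto\phi^{\sharp}$ is \emph{conjugate}-linear in $\phi$ (since $(\lambda\phi)^{\sharp}=\overline{\lambda}\,\phi^{\sharp}$), not linear; what you actually need, and what is true, is that for each fixed $\phi$ the map $x\mapsto\phi^{\sharp}(x)=\overline{\phi(x^{*})}$ is linear, so that $\phi^{\sharp}\in A^{*}$ and the weak-2-local hypothesis can be applied to it. This misstatement is harmless for the proof.
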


\begin{proof}$(a)$ Suppose $\Delta_1,\ldots, \Delta_n : A\to A$ are weak-2-local derivations and $\lambda_1,\ldots, \lambda_n$ are complex numbers. Given $a,b\in A$ and $\phi \in A^*$, we can find derivations $D^{j}_{a,b,\phi}: A\to A$ satisfying $\phi \Delta_j (a) = \phi D^{j}_{a,b,\phi} (a)$ and $\phi \Delta_j (b) = \phi D^{j}_{a,b,\phi} (b)$, for every $j=1,\ldots, n$. Then $$ \phi \left(\sum_{j=1}^n \lambda_j \Delta_j \right) (a) = \phi \left(\sum_{j=1}^n \lambda_j D^{j}_{a,b,\phi} \right) (a)$$ and $$ \phi \left(\sum_{j=1}^n \lambda_j \Delta_j \right) (b) = \phi \left(\sum_{j=1}^n \lambda_j D^{j}_{a,b,\phi} \right) (b),$$ which proves the statement.\smallskip

$(b)$ Suppose $\Delta : A\to A$ is a weak-2-local derivation. Given $a,b\in A$, $\phi \in A^*$, we consider the mapping $\phi^*\in A^*$ defined by $\phi^* (a) := \overline{\phi(a^*)}$ ($a\in A$). By the assumptions on $\Delta$ there exists a derivation $D_{a,b,\phi}: A\to A$ such that $\phi^* \Delta (a^*) = \phi D_{a,b,\phi} (a^*)$ and $\phi \Delta (b^*) = \phi D_{a,b,\phi} (b^*)$. We deduce from the above that $\phi \Delta^{\sharp} (a) = \phi D_{a,b,\phi}^{\sharp} (a)$ and $\phi \Delta^{\sharp} (b) = \phi D_{a,b,\phi}^{\sharp} (b)$, which proves the statement concerning $\Delta^{\sharp}$. Since $\Delta^{\sharp\sharp} = \Delta$ the reciprocal implication is clear.\smallskip

The statement in $(c)$ follows from $(a)$ and $(b)$.
\end{proof}

\begin{remark}\label{r weak-2-local *derivation}{\rm A $^*$-derivation on a C$^*$-algebra $A$ is a derivation $D$ on $A$ satisfying $D^{\sharp} = D,$ equivalently, $D (a^*)= D(a)^*,$ for every $a\in A.$ It is easy to see that, for each $^*$-derivation $D$ on $A$, the mapping $D|_{A_{sa}} : A_{sa}\to A_{sa}$ is a Jordan derivation, that is, $D(a \circ b) = a\circ D(b) + b\circ D(a),$ for every $a,b\in A_{sa},$ where $a\circ b = \frac12 (ab + ba)$ {\rm(}we should recall that $A_{sa}$ is not, in general, an associative subalgebra of $A$, but it is always a Jordan subalgebra of $A${\rm)}.\smallskip

Conversely, if $\delta: A_{sa}\to A_{sa}$ is a Jordan derivation on $A_{sa},$ then the linear mapping $\widehat{\delta}: A\to A,$ $\widehat{\delta} (a+i b) = \delta (a) + i \delta (b)$ is a Jordan $^*$-derivation on $A$, and hence a $^*$-derivation by \cite[Theorem 6.3]{John96} and \cite[Corollary 17]{PeRu}. When $M$ is a von Neumann algebra, we can deduce, via Sakai's theorem {\rm(}cf. \cite[Theorem 4.1.6]{Sak}{\rm)} that for every Jordan derivation $\delta: M_{sa}\to M_{sa},$ there exists $z\in i M_{sa}$ satisfying $\delta (a) = [z,a],$ for every $a\in M.$}
\end{remark}

\begin{lemma}\label{l linearity on Msa} Let $\Delta$ be a weak-2-local $^*$-derivation on a C$^*$-algebra $A$. Then $\Delta(a+ i b) = \Delta(a) + i \Delta (b) = \Delta (a-i b)^*$, for every $a,b\in A_{sa}$.
\end{lemma}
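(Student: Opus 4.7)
The plan is to establish $\Delta(a+ib)=\Delta(a)+i\Delta(b)$ by testing against a spanning family of functionals on $A$, and then to deduce $\Delta(a+ib)=\Delta(a-ib)^{*}$ from $1$-homogeneity together with the identity $\Delta^{\sharp}=\Delta$ (which, as remarked after Definition~\ref{def weak-2-local}, is automatic for weak-$2$-local $^{*}$-derivations).

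By the observation in the second paragraph of this section, in Definition~\ref{def weak-2-local} the dual $A^{*}$ may be replaced by any subset whose linear span equals $A^{*}$. Every $\phi\in A^{*}$ decomposes as $\phi_s+i\phi_a$ with $\phi_s,\phi_a$ self-adjoint (Hermitian) functionals, so it is enough to verify $\phi\Delta(a+ib)=\phi\Delta(a)+i\phi\Delta(b)$ for every self-adjoint $\phi\in A^{*}$. Fix such a $\phi$ and $a,b\in A_{sa}$ and invoke the weak-$2$-local $^{*}$-derivation hypothesis twice: first at the pair $(a+ib,a)$ to produce a $^{*}$-derivation $D$ with $\phi D(a)=\phi\Delta(a)$ and $\phi D(a+ib)=\phi\Delta(a+ib)$; second at $(a+ib,b)$ to produce a $^{*}$-derivation $D'$ with $\phi D'(b)=\phi\Delta(b)$ and $\phi D'(a+ib)=\phi\Delta(a+ib)$. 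Linearity of $D$ and $D'$ yields
\[
\phi\Delta(a+ib)\;=\;\phi\Delta(a)+i\,\phi D(b)\;=\;\phi D'(a)+i\,\phi\Delta(b).
\]

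The crux is that each of the four scalars $\phi\Delta(a)$, $\phi\Delta(b)$, $\phi D(b)$, $\phi D'(a)$ is \emph{real}: $\Delta$ maps $A_{sa}$ into $A_{sa}$ because $\Delta^{\sharp}=\Delta$; the $^{*}$-derivations $D,D'$ preserve $A_{sa}$; and $\phi$ is real-valued on $A_{sa}$. Equating real and imaginary parts in the displayed equality forces $\phi D'(a)=\phi\Delta(a)$ and $\phi D(b)=\phi\Delta(b)$, so $\phi\Delta(a+ib)=\phi\Delta(a)+i\phi\Delta(b)$, which gives the first equality. For the second, apply the first equality to the pair $(a,-b)\in A_{sa}\times A_{sa}$ and use $1$-homogeneity to get $\Delta(a-ib)=\Delta(a)-i\Delta(b)$; taking adjoints and recalling that $\Delta(a),\Delta(b)\in A_{sa}$ yields $\Delta(a-ib)^{*}=\Delta(a)+i\Delta(b)=\Delta(a+ib)$.

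There is no genuine obstacle: the essential ingredient is recognizing that the $^{*}$-nature of the local derivations $D_{a,b,\phi}$, combined with the freedom to test against a self-adjoint $\phi$, makes the two ``cross'' scalars $\phi D(b)$ and $\phi D'(a)$ real, so the linearity identity is recovered by reading off real and imaginary parts of two expressions for the same quantity $\phi\Delta(a+ib)$.
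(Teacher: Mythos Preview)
Your proof is correct and follows essentially the same approach as the paper's: both test against self-adjoint functionals $\phi$, invoke the weak-$2$-local $^{*}$-derivation hypothesis at the pairs $(a+ib,a)$ and $(a+ib,b)$, and exploit the fact that $^{*}$-derivations preserve $A_{sa}$ so that $\phi D(a)$, $\phi D(b)$ are real, allowing one to read off real and imaginary parts. The paper phrases the conclusion as the operator identities $\Delta(a+ib)\pm\Delta(a+ib)^{*}=2\Delta(a),\,2i\Delta(b)$, while you work at the scalar level and then pass to elements; you are also a bit more explicit about the second equality via $\Delta^{\sharp}=\Delta$, but the underlying idea is identical.
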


\begin{proof} Let us fix $a,b\in A_{sa}.$ By assumptions, for each $\phi\in A^*$ with $\phi^* = \phi$ (that is, $\phi (a^*) = \overline{\phi(a)}$ ($a\in A$). There exists a $^*$-derivation $D_{a,a+ib,\phi}$ on $A,$ depending on $a+ib$, $a$ and $\phi$, such that $$\phi \Delta (a+ i b) = \phi D_{a,a+i b,\phi} (a+ i b) = \phi D_{a,a+i b,\phi} (a) + i \phi D_{a,a+i b,\phi} (b),$$
 and $$\phi \Delta (a) = \phi D_{a,a+ i b,\phi} (a).$$ Then $\Re\hbox{e} \phi \Delta (a+ i b) = \phi D_{a,a+i b,\phi} (a),$ for every $\phi\in A^*$ with $\phi^* = \phi$, which proves that $\Delta (a+ i b) + \Delta (a+ i b)^* = 2 \Delta (a)$. We can similarly check that $\Delta (a+ i b) - \Delta (a+ i b)^* = 2 i \Delta (b)$.
\end{proof}

It is well known that every derivation $D$ on a unital C$^*$-algebra $A$ satisfies that $D(1) =0.$ Since the elements in $A^*$ separate the points in $A$, we also get:

\begin{lemma}\label{l Delta(1)=0}
Let $\Delta$ be a weak-2-local derivation on a unital C$^*$-algebra. Then $\Delta(1)=0$.$\hfill\Box$
\end{lemma}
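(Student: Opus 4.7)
The plan is to exploit the weak-2-local property at the pair $(a,b)=(1,1)$ together with the cited fact that any derivation on a unital C$^*$-algebra kills the unit. Fix an arbitrary $\phi\in A^{*}$. By Definition \ref{def weak-2-local} applied with $a=b=1$, there exists a derivation $D_{1,1,\phi}:A\to A$ such that
\[
\phi\,\Delta(1) \;=\; \phi\,D_{1,1,\phi}(1).
\]
Since $D_{1,1,\phi}$ is a derivation on the unital C$^{*}$-algebra $A$, one has $D_{1,1,\phi}(1)=0$, whence $\phi\,\Delta(1)=0$.

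As $\phi\in A^{*}$ was arbitrary and $A^{*}$ separates the points of $A$ by the Hahn--Banach theorem, we conclude $\Delta(1)=0$. There is no genuine obstacle here: the argument is a one-line application of the weak-2-local hypothesis to the pair $(1,1)$, and the only ingredients used are the standard fact $D(1)=0$ for any derivation on a unital associative algebra and the norming property of $A^{*}$.
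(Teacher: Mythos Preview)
Your argument is correct and is exactly the proof the paper has in mind: apply the weak-2-local hypothesis at the unit to obtain a derivation $D$ with $\phi\Delta(1)=\phi D(1)=0$, and then use that $A^{*}$ separates points. There is nothing to add.
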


\begin{lemma}\label{l Delta(1-x)+Delta(x)}
Let $\Delta$ be a weak-2-local derivation on a unital C$^*$-algebra $A$. Then $\Delta(1-x)+\Delta(x)=0$, for every $x\in A$.
\end{lemma}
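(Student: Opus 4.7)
The plan is straightforward: given $x\in A$ and any $\phi\in A^*$, apply the weak-2-local derivation hypothesis to the pair $a=1-x$, $b=x$. This yields a (linear) derivation $D_{1-x,x,\phi}:A\to A$ with
\[
\phi\Delta(1-x)=\phi D_{1-x,x,\phi}(1-x),\qquad \phi\Delta(x)=\phi D_{1-x,x,\phi}(x).
\]
Since $D_{1-x,x,\phi}$ is genuinely linear, we may add these two identities:
\[
\phi\bigl(\Delta(1-x)+\Delta(x)\bigr)=\phi D_{1-x,x,\phi}(1-x)+\phi D_{1-x,x,\phi}(x)=\phi D_{1-x,x,\phi}(1).
\]
By Lemma \ref{l Delta(1)=0} (applied to the actual derivation $D_{1-x,x,\phi}$, or simply by the well-known fact that every derivation on a unital C$^*$-algebra kills the unit), the right-hand side vanishes. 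Hence $\phi(\Delta(1-x)+\Delta(x))=0$ for every $\phi\in A^*$, and the Hahn--Banach theorem (i.e.\ the fact that $A^*$ separates points of $A$) gives $\Delta(1-x)+\Delta(x)=0$.

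I do not anticipate any obstacle here: the entire argument is a one-line consequence of the \emph{linearity} of the derivation supplied by the weak-2-local hypothesis, together with the separation property of $A^*$. The only subtlety worth emphasizing is that the derivation $D_{1-x,x,\phi}$ depends on $\phi$, so one cannot conclude that $\Delta$ itself is additive on such pairs; one only gets the scalar-valued identity after pairing with $\phi$, which is exactly what is needed.
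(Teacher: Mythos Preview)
Your proof is correct and follows essentially the same approach as the paper: apply the weak-2-local hypothesis to the pair $(1-x,x)$ and an arbitrary $\phi\in A^*$, use linearity of the resulting derivation to combine the two identities into $\phi D_{1-x,x,\phi}(1)=0$, and conclude via Hahn--Banach.
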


\begin{proof} Let $x\in A$. Given $\phi\in A^*,$ there exists a derivation $D_{x,1-x,\phi}: A\to A$, such that $\phi \Delta (x) = \phi D_{x,1-x,\phi} (x)$ and $\phi \Delta (1-x) = \phi D_{x,1-x,\phi} (1-x)$. Therefore, $$\phi (\Delta(1-x)+\Delta(x)) = \phi D_{x,1-x,\phi} (1-x +x) = 0.$$ We conclude by the Hahn-Banach theorem that $\Delta(1-x)+\Delta(x)=0$.
\end{proof}

\begin{lemma}\label{p Delta(p) p}
Let $\Delta$ be a weak-2-local derivation on a unital C$^*$-algebra, and let $p$ be a projection in $A$. Then $$p\Delta(p)p=0\quad\textrm{and}\quad (1-p)\Delta(p)(1-p)=0.$$
\end{lemma}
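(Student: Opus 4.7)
The plan is to reduce the statement to a classical identity for ordinary derivations and then transfer it to $\Delta$ via suitable test functionals. The classical fact I will use is that if $D\colon A\to A$ is any derivation and $p\in A$ is a projection, then $p D(p) p=0$ and $(1-p)D(p)(1-p)=0$. This is immediate from the Leibniz rule: differentiating $p=p^2$ gives $D(p)=D(p)p+pD(p)$, and multiplying on both sides by $p$ yields $pD(p)p=2pD(p)p$. The assertion for $1-p$ follows by the same argument applied to the projection $1-p$, together with $D(1-p)=-D(p)$ (which holds because $D(1)=0$).

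Now I would fix a projection $p\in A$ and an arbitrary $\varphi\in A^*$, and introduce the two auxiliary functionals
$$\phi_1(x):=\varphi(pxp),\qquad \phi_2(x):=\varphi\bigl((1-p)x(1-p)\bigr),$$
both of which lie in $A^*$. Applying the defining property of a weak-2-local derivation with $a=b=p$ and $\phi=\phi_1$ produces a derivation $D_{p,p,\phi_1}$ on $A$ such that
$$\varphi\bigl(p\Delta(p)p\bigr)=\phi_1\Delta(p)=\phi_1 D_{p,p,\phi_1}(p)=\varphi\bigl(p D_{p,p,\phi_1}(p)p\bigr)=0,$$
by the classical identity. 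Running the same argument with $\phi_2$ in place of $\phi_1$ gives $\varphi((1-p)\Delta(p)(1-p))=0$.

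Since $\varphi\in A^*$ was arbitrary, the Hahn--Banach theorem forces $p\Delta(p)p=0$ and $(1-p)\Delta(p)(1-p)=0$, which is exactly the claim. There is no real obstacle here: the only delicate point is to notice that the weak-2-local hypothesis may be tested against \emph{any} element of $A^*$, including functionals of the compressed form $\varphi(p\,\cdot\,p)$ and $\varphi((1-p)\,\cdot\,(1-p))$, and that this reduces the verification to the Leibniz identity for a genuine derivation.
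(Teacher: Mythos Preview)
Your proof is correct and follows essentially the same route as the paper: test $\Delta(p)$ against functionals of the compressed form $x\mapsto \varphi(pxp)$ (respectively $x\mapsto \varphi((1-p)x(1-p))$), use that $pD(p)p=0$ for a genuine derivation, and conclude by Hahn--Banach. The paper's version phrases this as ``$\phi=(1-p)\phi(1-p)$'' and invokes an external lemma to pass from the functional identity to $(1-p)\Delta(p)(1-p)=0$, then obtains the other equality via the substitution $p\mapsto 1-p$ and Lemma~\ref{l Delta(1-x)+Delta(x)}; your argument is slightly more self-contained in that it treats both compressions symmetrically and needs neither of those auxiliary results.
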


\begin{proof} Let $\phi$ be a functional in $A^*$ satisfying $\phi = (1-p)\phi (1-p).$ Pick a derivation $D_{p,\phi}: A\to A$ satisfying $\phi \Delta (p) = \phi D_{p,\phi} (p).$ Then $$\phi \Delta (p) = \phi \left( D_{p,\phi} (p) p +  p D_{p,\phi} (p)\right) = 0,$$ where in the last equality we applied $\phi = (1-p)\phi (1-p).$ Lemma 3.5 in \cite{BenAliPeraltaRamirez} implies that $(1-p)\Delta(p)(1-p)=0.$ Replacing $p$ with $1-p$ and applying Lemma \ref{l Delta(1-x)+Delta(x)}, we get $0 = p \Delta (1-p) p = - p \Delta (p) p .$
\end{proof}

The first statement in the following proposition is probably part of the folklore in the theory of derivations,
however we do not know an explicit reference for it.

\begin{proposition}\label{p restriction}
Let $A$ be a C$^*$-algebra, $D:A\rightarrow A$ a derivation {\rm(}respectively, a $^*$-derivation{\rm)}, and let $p$ be a projection in $A$.
Then the operator $pDp|_{pAp}:pAp\rightarrow pAp$, $x\mapsto pD(x) p$ is a derivation {\rm(}respectively, a $^*$-derivation{\rm)} on $pAp$. Consequently, if $\Delta: A\to A$ is a weak-2-local derivation {\rm(}respectively, a weak-2-local $^*$-derivation{\rm)} on $A$, the mapping $p\Delta p|_{pAp} : pAp \to pAp,$ $x\mapsto p \Delta (x) p$ is a weak-2-local derivation {\rm(}respectively, a weak-2-local $^*$-derivation{\rm)} on $pAp$.
\end{proposition}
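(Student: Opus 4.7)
The plan is to verify the derivation identity directly on $pAp$ using the projection identities $px=xp=x$ for $x\in pAp$, and then to bootstrap the weak-2-local statement via a natural extension of functionals from $pAp$ to $A$.

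For the first claim, I would let $x,y\in pAp$, so that $px=xp=x$ and $py=yp=y$. Applying the Leibniz rule for $D$ and inserting these identities, I obtain
\[
pDp(xy)=pD(xy)p=pD(x)yp+pxD(y)p=(pD(x)p)\,y+x\,(pD(y)p),
\]
since $pD(x)yp=pD(x)(py)=pD(x)p\cdot y$ (and analogously on the right), so $pDp|_{pAp}$ is a derivation on $pAp$. When $D$ is a $^*$-derivation, $pAp$ is $^*$-invariant (because $p^*=p$, so $x^*=px^*p\in pAp$), and
\[
pDp(x^*)=pD(x^*)p=pD(x)^*p=(pD(x)p)^*=(pDp(x))^*,
\]
so $pDp|_{pAp}$ is a $^*$-derivation.

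For the weak-2-local consequence, I would argue pointwise. Fix $a,b\in pAp$ and a continuous functional $\psi\in (pAp)^*$, and define $\phi\in A^*$ by $\phi(y):=\psi(pyp)$; this is bounded because the compression $y\mapsto pyp$ is. By hypothesis on $\Delta$, there exists a derivation (respectively, a $^*$-derivation) $D_{a,b,\phi}$ on $A$ with $\phi\Delta(a)=\phi D_{a,b,\phi}(a)$ and $\phi\Delta(b)=\phi D_{a,b,\phi}(b)$. Rewriting via the defining formula for $\phi$ yields
\[
\psi(p\Delta(a)p)=\psi(pD_{a,b,\phi}(a)p)\quad\hbox{and}\quad \psi(p\Delta(b)p)=\psi(pD_{a,b,\phi}(b)p);
\]
that is, the map $pD_{a,b,\phi}p|_{pAp}$, which by the first part is a derivation (respectively, a $^*$-derivation) on $pAp$, witnesses the weak-2-local property for $p\Delta p|_{pAp}$ at the triple $(a,b,\psi)$.

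The proof is essentially a direct verification and I do not anticipate a serious obstacle; the only delicate point is arranging the extension of $\psi$ to $A$ so that the compressions on the two sides of the identity match, and the choice $\phi(y)=\psi(pyp)$ is precisely what accomplishes this.
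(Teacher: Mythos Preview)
Your proof is correct and follows essentially the same approach as the paper: the derivation identity is verified directly using $px=xp=x$ and $py=yp=y$ for $x,y\in pAp$, exactly as the paper does. You additionally spell out the $^*$-derivation case and the weak-2-local consequence (via the extension $\phi(y)=\psi(pyp)$), both of which the paper leaves implicit under the word ``Consequently''; these details are routine and your argument for them is sound.
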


\begin{proof} Let $T$ denote the linear mapping $pDp|_{pAp}:pAp\rightarrow pAp$, $x\mapsto p D(x) p$. We shall show that $T$ is a derivation on $pAp$.
Let $x,y\in pAp$. Since $px=xp = x$ and $py=yp= y$, we have
$$ T(xy) = p D(xy)p = p D(x) y p+ p x D(y) p $$ $$= pD(x)p y+ x pD(y)p = T(x) y+ x T(y).$$
\end{proof}

\section{weak-2-local derivations on matrix algebras}

In this section we shall study weak-2-local derivations on matrix algebras.

\begin{lemma}\label{l trace zero} Let $\Delta: M_n \to M_n$ be a weak-2-local derivation on $M_n$. Let $tr$ denote the unital trace on $M_n$. Then, $tr \Delta (x) =0$, for every $x\in M_n$.
\end{lemma}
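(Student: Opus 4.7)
The plan is to exploit two classical facts about $M_n$: first, every derivation on $M_n$ is inner (a consequence of Sakai's theorem, or of the Skolem--Noether theorem), so any derivation $D: M_n \to M_n$ has the form $D(x) = [a,x] = ax - xa$ for some $a \in M_n$; second, the unital trace $tr$ is tracial, i.e., $tr(yz) = tr(zy)$ for all $y,z \in M_n$, so $tr([a,x]) = tr(ax) - tr(xa) = 0$. Combining these, $tr \circ D \equiv 0$ for every derivation $D$ on $M_n$.

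Fix now $x \in M_n$. Since $tr \in (M_n)^*$, we can apply the weak-2-local derivation hypothesis to the triple $(a,b,\phi) = (x, x, tr)$ (any choice of $b$ works, e.g.\ $b = x$). This produces a derivation $D_{x,x,tr}: M_n \to M_n$ such that
\[
tr \, \Delta(x) = tr \, D_{x,x,tr}(x).
\]
By the preceding paragraph, the right-hand side vanishes, so $tr \, \Delta(x) = 0$, as desired.

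There is no real obstacle here; the lemma is essentially immediate from the inner-derivation theorem for $M_n$ together with the tracial property of $tr$. The only thing to verify is that $tr$ does lie in $(M_n)^*$ and may be used as the test functional $\phi$ in Definition \ref{def weak-2-local}, which is trivial. The content of the lemma is purely the observation that the weak-2-local hypothesis lets us pull a single functional (here $tr$) through $\Delta$ via an actual derivation, and this particular functional annihilates all derivations on $M_n$.
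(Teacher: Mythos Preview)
Your proof is correct and takes essentially the same approach as the paper: apply the weak-2-local hypothesis with $\phi = tr$, use that every derivation on $M_n$ is inner (Sakai's theorem), and conclude via $tr([z,x])=0$. The paper's argument is identical in structure, only more terse.
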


\begin{proof} Let $x$ be an arbitrary element in $M_n$. By Sakai's theorem (cf. \cite[Theorem 4.1.6]{Sak}), every derivation on $M_n$ is inner. We deduce from our hypothesis that there exists an element $z_{x,tr}$ in $M_n$, depending on $tr$ and $x$, such that $tr \Delta(x)= tr [z_{x,\phi},x]= tr (z_{x,\phi} x - x z_{x,\phi}) =0$.
\end{proof}

The algebra $M_2$ of all 2 by 2 matrices must be treated with independent arguments.\smallskip

We set some notation. Given two elements $\xi,\eta$ in a Hilbert space $H$, the symbol $\xi\otimes \eta$ will denote the rank-one operator in $B(H)$ defined by $\xi\otimes \eta (\kappa) = (\kappa |\eta) \xi.$ We can also regard $\phi=\xi\otimes \eta$ as an element in the trace class operators (that is, in the predual of $B(H)$) defined by $\xi\otimes \eta (a) = (a(\xi)| \eta)$ ($a\in B(H)$).

\begin{theorem}\label{t w-2-local derivations on M2}
Every weak-2-local derivation on $M_2$ is linear and a derivation.
\end{theorem}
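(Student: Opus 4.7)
The plan is to establish that $\Delta$ is $\mathbb{C}$-linear; once this is done, $\Delta$ is a (linear) weak-local derivation on the C$^*$-algebra $M_2$ and Theorem~3.4 in \cite{BenAliPeraltaRamirez} identifies it as a derivation. The strategy has three stages: reduce to the traceless part of $M_2$, derive a bilinear identity as the essential analytic content of weak-2-locality, and combine these to obtain additivity.

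First I would observe that $\Delta(a + \lambda 1) = \Delta(a)$ for every $a \in M_2$ and every $\lambda \in \mathbb{C}$. Indeed, applying weak-2-locality to the pair $(a + \lambda 1, a)$ and using that every derivation on $M_2$ annihilates $1$, the derivation $D = D_{a+\lambda 1, a, \phi}$ satisfies $D(a + \lambda 1) = D(a)$, whence $\phi \Delta(a + \lambda 1) = \phi D(a) = \phi \Delta(a)$ for every $\phi \in M_2^*$; the Hahn--Banach theorem forces the equality. Combined with Lemma~\ref{l trace zero}, $\Delta$ descends to a map on the three-dimensional traceless part of $M_2$, which I would identify with $\mathbb{C}^3$ via the Pauli basis $\sigma_1, \sigma_2, \sigma_3$, writing each element as $a = a_0 1 + a' \cdot \sigma$ with $a' \in \mathbb{C}^3$. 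The commutator identity $[\sigma_i, \sigma_j] = 2i\,\epsilon_{ijk}\sigma_k$ identifies each inner derivation $[z, \cdot]$ with the map $x \mapsto 2i (z' \times x')$ on traceless elements; by Sakai's theorem every derivation on $M_2$ is inner. Pairing $M_2^*$ with $M_2$ via the trace and writing $\phi = \phi_0 1 + \phi' \cdot \sigma$, the weak-2-local hypothesis then reads: for every $a, b \in M_2$ and every $\phi \in M_2^*$, there exists $z' \in \mathbb{C}^3$ with
\[
\phi' \cdot \Delta(a)' = 2i\, z' \cdot (a' \times \phi'), \qquad \phi' \cdot \Delta(b)' = 2i\, z' \cdot (b' \times \phi').
\]

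Setting $b = a$ together with the observation that $\phi$ annihilates $V_a = \{[z, a] : z \in M_2\}$ whenever $\phi$ commutes with $a$ yields $\Delta(a) \in V_a$, equivalently $a' \cdot \Delta(a)' = 0$. To extract information from genuine pairs I would take $a, b \in M_2$ with $a', b'$ linearly independent and specialize $\phi' = \alpha a' + \beta b'$ with $\alpha, \beta \neq 0$; then $a' \times \phi' = \beta (a' \times b')$ and $b' \times \phi' = -\alpha (a' \times b')$ are collinear and nonzero, so the existence of a single $z'$ satisfying both displayed equations forces the consistency relation $\alpha\, \phi' \cdot \Delta(a)' + \beta\, \phi' \cdot \Delta(b)' = 0$. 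Substituting back $\phi' = \alpha a' + \beta b'$ and using $a' \cdot \Delta(a)' = b' \cdot \Delta(b)' = 0$, this reduces to the bilinear identity
\[
a' \cdot \Delta(b)' + b' \cdot \Delta(a)' = 0.
\]

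Additivity is now a short linear-algebra calculation. For $a, b \in M_2$ with $a', b'$ linearly independent and any $e \in \mathbb{C}^3$ linearly independent from each of $a', b', a' + b'$, applying the bilinear identity at the pairs $(a, e), (b, e), (a+b, e)$ and summing the first two while subtracting the third yields
\[
e \cdot \bigl[\Delta(a+b) - \Delta(a) - \Delta(b)\bigr]' = 0.
\]
Such vectors $e$ form a Zariski-dense subset of $\mathbb{C}^3$, and the bracketed element is already traceless by Lemma~\ref{l trace zero}, hence $\Delta(a+b) = \Delta(a) + \Delta(b)$. The case $a' \parallel b'$ is immediate from $\mathbb{C}$-homogeneity, and the first-step reduction transfers additivity to all of $M_2$. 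Combined with $\mathbb{C}$-homogeneity this yields $\mathbb{C}$-linearity, after which \cite[Theorem~3.4]{BenAliPeraltaRamirez} concludes that $\Delta$ is a derivation. The main obstacle is Stage~2: the two displayed equations admit a solution $z'$ automatically whenever $a' \times \phi'$ and $b' \times \phi'$ are linearly independent, so all meaningful information must be extracted from the coplanar regime $\phi' \in \mathrm{span}(a', b')$, and the cancellation producing the symmetric-bilinear identity has to be coaxed out by the explicit substitution above.
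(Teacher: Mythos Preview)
Your argument is correct and takes a genuinely different route from the paper's proof.

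The paper proceeds entirely in coordinates with respect to the matrix units $e_{ij}$: it first subtracts an inner derivation $[z_0,\cdot]$ so that the residue $\widetilde\Delta$ vanishes at $p_1,p_2$, then subtracts a diagonal inner derivation $[z_1,\cdot]$ so that $\widehat\Delta$ also vanishes at $e_{12}$, and finally, across Steps III--VI, evaluates $\widehat\Delta$ at $e_{21}$, at $\alpha e_{12}+\beta e_{21}$, at $tp_1+\alpha e_{12}$, and at $tp_1+\alpha e_{12}+\beta e_{21}$, each time choosing specific functionals $\phi$ to kill one matrix entry at a time. The outcome is the explicit identity $\Delta=[z_0+z_1,\cdot]$, so the paper never invokes \cite[Theorem~3.4]{BenAliPeraltaRamirez}.

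Your approach replaces this case analysis by a structural observation: identifying the traceless part of $M_2$ with $(\mathbb{C}^3,\times)$ via the Pauli basis turns inner derivations into cross products, and the weak-2-local hypothesis at $\phi'\in\mathrm{span}(a',b')$ collapses to the single compatibility constraint $a'\cdot\Delta(b)'+b'\cdot\Delta(a)'=0$. This symmetric bilinear identity, together with $a'\cdot\Delta(a)'=0$, is exactly the statement that the unknown ``dot product'' $\langle u,v\rangle:=u\cdot\delta(v)$ is antisymmetric, and additivity follows formally. The argument is shorter and more conceptual; the price is that, having established only linearity, you outsource the final step (linear weak-local $\Rightarrow$ derivation) to \cite{BenAliPeraltaRamirez}, whereas the paper's computation yields the inner derivation explicitly. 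Both trade-offs are reasonable; your version also makes transparent \emph{why} the two-point condition suffices, namely because in $\mathbb{C}^3$ the vectors $a'\times\phi'$ and $b'\times\phi'$ become collinear precisely on the plane $\mathrm{span}(a',b')$, which is where all the information resides.
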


\begin{proof}
Let $\Delta$ be a weak-2-local derivation on $M_2$. To simplify notation we set $e_{ij}=\xi_i\otimes \xi_j$ for $1\leq i,j\leq 2$, where $\{\xi_1,\xi_2\}$ is a fixed orthonormal basis of $\mathbb{C}^2$. We also write $p_1=e_{11}$ and $p_2=e_{22}$. The proof is divided into several steps.\smallskip

Lemma \ref{l trace zero} shows that \begin{equation}\label{eq trace zero on the image}
tr \Delta (x) =0,
\end{equation} for every $x\in M_2$.\smallskip

\textit{Step I.} Let us write $\displaystyle \Delta(p_1)=\sum_{i,j=1}^2 \lambda_{ij} e_{ij}$, where $\lambda_{ij}\in \mathbb{C}$. For $\phi= \xi_1\otimes \xi_1\in M_2^*$ there exists an element $z= \left(
                                                        \begin{array}{cc}
                                                          z_{11} & z_{12} \\
                                                          z_{21} & z_{22} \\
                                                        \end{array}
                                                      \right)
$ in $M_2$, depending on $\phi$ and $p_1$, such that $\phi\Delta(p_1)=\phi[z,p_1]$. Since
\begin{equation}\label{[z,p1]}
    [z,p_1]= -z_{12} e_{12}+ z_{21} e_{21},
\end{equation}
we deduce that $\lambda_{11} =\phi \Delta (p_1) = \phi [z,p_1] = 0$. Since $\lambda_{11}+\lambda_{22}= tr \Delta (p_1) = 0$, we also have $\lambda_{22} =0$. Therefore,
$$\Delta(p_1)=\lambda_{12} e_{12}+\lambda_{21} e_{21}.$$

Defining $z_0:=\lambda_{21} e_{21}-\lambda_{12} e_{12}$, it follows that $\widetilde{\Delta} = \Delta-[z_0,.]$ is a weak-2-local derivation (cf. Lemma \ref{l sharp}$(a)$) which vanishes at $p_1$.  Applying Lemma \ref{l Delta(1-x)+Delta(x)}, we deduce that \begin{equation}\label{eq Delta at p1 and p2} \widetilde{\Delta} (p_1)= \widetilde{\Delta} (p_2)=0.
\end{equation}

\textit{Step II.} Let us write $\displaystyle \widetilde{\Delta} (e_{12})=\sum_{i,j=1}^2\lambda_{ij} e_{ij}$, with $\lambda_{22}= -\lambda_{11}$ (cf. \eqref{eq trace zero on the image}). For $\phi= \xi_1\otimes \xi_2 \in M_2^*$, there exists an element $z= \left(
                                                        \begin{array}{cc}
                                                          z_{11} & z_{12} \\
                                                          z_{21} & z_{22} \\
                                                        \end{array}
                                                      \right)$ in $M_2$, depending on $\phi$ and $e_{12}$, such that $\phi\widetilde{\Delta}(e_{12})=\phi[z,e_{12}]$. Since
  \begin{equation}\label{[z,p12]}
    [z,e_{12}]=-z_{21}p_1+(z_{11}-z_{22})e_{12}+z_{21}p_2,
  \end{equation} we see that $\lambda_{21}=0$.\smallskip

For $\phi= \xi_1\otimes \xi_1 - \xi_1\otimes \xi_2 \in M_2^*$,  there exists an element $z= \left(
                                                        \begin{array}{cc}
                                                          z_{11} & z_{12} \\
                                                          z_{21} & z_{22} \\
                                                        \end{array}
                                                      \right)$ in $M_2$, depending on $\phi$, $p_1$ and $e_{12}$, such that
$\phi\widetilde{\Delta}(p_1)=\phi[z,p_1]$ and $\phi\widetilde{\Delta}(e_{12})=\phi[z,e_{12}]$. The identities \eqref{[z,p1]} and \eqref{[z,p12]} (and \eqref{eq Delta at p1 and p2}) imply that $\lambda_{11}=-z_{21}$ and $0= -z_{21}$, and hence $\lambda_{11}=0$. Therefore, there exists a complex number $\delta$ satisfying
  \begin{equation}\label{Delta(p12)}
    \widetilde{\Delta}(e_{12})=\delta e_{12} = \left[z_1, e_{12} \right],
  \end{equation} where $z_1 = \left(
                                \begin{array}{cc}
                                  \delta &0 \\
                                  0 & 0 \\
                                \end{array}
                              \right)
  $. We observe that $[z_1,\lambda p_1 + \mu p_2] =0$, for every $\lambda,\mu\in \mathbb{C}$. Thus, the mapping $\widehat{\Delta} = \widetilde{\Delta}- [z_1,.] =\Delta -[z_0,.]-[z_1,.]$ is a weak-2-local derivation satisfying \begin{equation}\label{eq widehatDelta} \widehat{\Delta} (e_{12}) =  \widehat{\Delta} ( p_1 ) = \widehat{\Delta} ( p_2 )=0.
  \end{equation}

\textit{Step III.} Let us write $\displaystyle \widehat{\Delta}(e_{21})=\sum_{i,j=1}^2\lambda_{ij} e_{ij}$, with $\lambda_{11}= -\lambda_{22}$ (see Lemma \ref{l trace zero}). For $\phi=\xi_2\otimes \xi_1\in M_2^*$, there exists an element $z= \left(
                                                        \begin{array}{cc}
                                                          z_{11} & z_{12} \\
                                                          z_{21} & z_{22} \\
                                                        \end{array}
                                                      \right)$ in $M_2$, depending on $\phi$ and $e_{21}$, such that $\phi\widehat{\Delta}(e_{21})=\phi[z,e_{21}]$. Since
  \begin{equation}\label{[z,p21]}
    [z,e_{21}]=z_{12}p_1-(z_{11}-z_{22})e_{21}-z_{12}p_2,
  \end{equation} we see that $\lambda_{12}=0$.\smallskip

Take now $\phi = \xi_1\otimes \xi_1 - \xi_2\otimes \xi_1\in M_2^*$. By hypothesis, there exists an element $z= \left(
                                                        \begin{array}{cc}
                                                          z_{11} & z_{12} \\
                                                          z_{21} & z_{22} \\
                                                        \end{array}
                                                      \right)$ in $M_2$, depending on $\phi$, $p_1$ and $e_{21}$, such that $\phi\widehat{\Delta}(p_1)=\phi[z,p_1]$ and $\phi\widehat{\Delta}(e_{21})=\phi[z,e_{21}]$.
We deduce from \eqref{[z,p1]}, \eqref{[z,p21]} and \eqref{eq widehatDelta} that  $z_{12} = \lambda_{11}$ and $z_{12} = 0,$ which gives $\lambda_{11}=0$.\smallskip

For $\phi = \xi_2\otimes \xi_1- \xi_1\otimes \xi_2\in M_2^*$, there exists an element $z= \left(
                                                        \begin{array}{cc}
                                                          z_{11} & z_{12} \\
                                                          z_{21} & z_{22} \\
                                                        \end{array}
                                                      \right)$ in $M_2$, depending on $\phi$, $e_{12}$ and $e_{21}$,
such that $\phi\widehat{\Delta}(e_{12})=\phi[z,e_{12}]$ and $\phi\widehat{\Delta}(e_{21})=\phi[z,e_{21}]$. We apply
\eqref{[z,p12]}, \eqref{[z,p21]} and \eqref{eq widehatDelta} to obtain $-\lambda_{21} = z_{11}- z_{22}$ and $0= \phi\widehat{\Delta}(e_{12}) = z_{11}- z_{22},$ which proves that $\lambda_{21}=0$. Therefore
  \begin{equation}\label{Delta(p21)}
    \widehat{\Delta}(e_{21})=0.
  \end{equation}

We shall finally prove that $\widehat{\Delta}\equiv 0$, and consequently $\Delta=  [z_0,.]+[z_1,.]$ is a linear mapping and a derivation. \smallskip

\textit{Step IV.} Let us fix $\alpha,\beta \in \mathbb{C}$. We write $\displaystyle \widehat{\Delta}(\alpha e_{12}+\beta e_{21})=\sum_{i,j=1}^2 \lambda_{ij} e_{ij}$, where $\lambda_{11}= -\lambda_{22}$. For $\phi= \xi_2\otimes \xi_1\in M_2^*$, there exists an element $z= \left(
                                                        \begin{array}{cc}
                                                          z_{11} & z_{12} \\
                                                          z_{21} & z_{22} \\
                                                        \end{array}
                                                      \right)$ in $M_2$, depending on $\phi$, $e_{12}$ and $\alpha e_{12}+ \beta e_{21}$, such that $\phi\widehat{\Delta}(e_{12})=\phi[z,e_{12}]$ and $\phi\widehat{\Delta}(\alpha e_{12}+ \beta e_{21})=\phi[z,\alpha e_{12}+ \beta e_{21}]$. Since
\begin{equation}\label{[z,ap12+p21]}
   [z,\alpha e_{12}+\beta e_{21}] = (\beta z_{12}-\alpha z_{21}) p_1 +\alpha (z_{11}-z_{22}) e_{12}
\end{equation} $$+ \beta (z_{22}-z_{11}) e_{21}+(\alpha z_{21}- \beta z_{12}) p_2,$$ we have $\lambda_{12}= \alpha (z_{11}-z_{22})$. Now, the identities \eqref{[z,p12]} and \eqref{eq widehatDelta} imply $z_{11}-z_{22}=0$, and hence $\lambda_{12}=0$.\smallskip

For $\phi=\xi_1\otimes \xi_2\in M_2^*$ there exists an element $z= \left(
                                                        \begin{array}{cc}
                                                          z_{11} & z_{12} \\
                                                          z_{21} & z_{22} \\
                                                        \end{array}
                                                      \right)$ in $M_2$,
depending on $\phi$, $e_{21}$ and $\alpha e_{12}+\beta e_{21}$, such that $\phi\widehat{\Delta}(e_{21})=\phi[z,e_{21}]$ and $\phi\widehat{\Delta}(\alpha e_{12}+\beta e_{21})=\phi[z,\alpha e_{12}+ \beta e_{21}]$. We deduce from \eqref{[z,p21]}, \eqref{[z,ap12+p21]} and \eqref{Delta(p21)}, that $\lambda_{21} = \beta(z_{22}-z_{11})$ and $z_{22}-z_{11}=0$, witnessing that $\lambda_{21}=0$.\smallskip

For $\phi = \xi_1\otimes \xi_1+\beta \xi_2\otimes \xi_1+\alpha \xi_1\otimes \xi_2\in M_2^*$ there exists an element $z$ in $M_2$, depending on $\phi$, $p_1$ and $\alpha e_{12}+ \beta e_{21}$, such that $\phi\widehat{\Delta}(p_1)=\phi[z,p_1]$ and $\phi\widehat{\Delta}(\alpha e_{12}+\beta e_{21})=\phi[z,\alpha e_{12}+ \beta e_{21}]$. It follows from \eqref{[z,ap12+p21]} and \eqref{[z,p1]} that $\lambda_{11}+\beta \lambda_{12}+\alpha \lambda_{21}=\beta z_{12}-\alpha z_{21}$, and $-\beta z_{12}+\alpha z_{21}=0$, which implies that $\lambda_{11}=0$, and hence
  \begin{equation}\label{Delta(ap12+p21)}
    \widehat{\Delta}(\alpha e_{12}+\beta e_{21})=0,
  \end{equation} for every $\alpha, \beta\in \mathbb{C}$.\smallskip

\textit{Step V.} In this step we fix two complex numbers $t,\alpha\in\mathbb{C}$, and we write $\displaystyle \widehat{\Delta}(t p_1+ \alpha e_{12} )=\sum_{i,j=1}^2\lambda_{ij} e_{ij}$, with $\lambda_{11}= -\lambda_{22}$. Applying that $\widehat{\Delta}$ is a weak-2-local derivation with $\phi= \xi_1\otimes \xi_1\in M_2^*$, $e_{12}$ and $t p_1+ \alpha e_{12}$, we deduce from the identity
\begin{equation}\label{[z,tp1+ap12]}
    [z,tp_1+\alpha e_{12}] =-\alpha z_{21} p_1+(\alpha z_{11}- t z_{12}-\alpha z_{22}) e_{12}
  \end{equation} $$+t z_{21} e_{21}+\alpha z_{21} p_2,$$ combined with \eqref{[z,p12]} and \eqref{eq widehatDelta}, that $-\alpha z_{21} = \lambda_{11}$, and $z_{21} =0,$ and hence $\lambda_{11}=0$.\smallskip

Repeating the above arguments with $\phi = \xi_1\otimes \xi_2\in M_2^*$, $p_{1}$ and $t p_1+\alpha e_{12}$, we deduce from \eqref{[z,p1]}, \eqref{[z,tp1+ap12]} and \eqref{eq widehatDelta}, that $\lambda_{21}= t z_{21}$ and $z_{21} =0$,  which proves that $\lambda_{21}=0$.\smallskip

A similar reasoning with $\phi= t \xi_1\otimes \xi_1 -\alpha \xi_2\otimes \xi_1\in M_2^*$, $\alpha e_{12} +\alpha e_{21}$ and $t p_1+ \alpha e_{12}$, gives, via \eqref{[z,ap12+p21]}, \eqref{Delta(ap12+p21)}, and \eqref{[z,tp1+ap12]}, that $t \lambda_{11}-\alpha \lambda_{12}= t \alpha z_{12} - t \alpha z_{21} -\alpha^2 z_{11} +\alpha^2 z_{22}$ and $t \alpha z_{12} - t \alpha z_{21} -\alpha^2 z_{11} +\alpha^2 z_{22} =0$. Therefore $\alpha \lambda_{12}=0$ and
\begin{equation}\label{Delta(tp1+a p12)}
    \widehat{\Delta}(t p_1+ \alpha e_{12} )=0,
\end{equation} for every $t,\alpha\in \mathbb{C}$.\smallskip

A similar argument shows that
\begin{equation}\label{Delta(tp1+b p21)}
    \widehat{\Delta}(t p_1+ \beta e_{21} )=0,
\end{equation} for every $t,\beta\in \mathbb{C}$.\smallskip

\textit{Step VI.} In this step we fix $t,\alpha,\beta\in\mathbb{C}$, and we write $$\displaystyle \widehat{\Delta}(tp_1+\alpha e_{12}+\beta e_{21})=\sum_{i,j=1}^2\lambda_{ij} e_{ij},$$ with $\lambda_{11}= -\lambda_{22}$. Applying that $\widehat{\Delta}$ is a weak-2-local derivation with $\phi= \alpha \xi_1\otimes \xi_2 +\beta \xi_2\otimes \xi_1\in M_2^*$, $p_1$ and $t p_1+\alpha e_{12}+ \beta e_{21}$, we deduce from the identity
\begin{equation}\label{[z,tp1+a p12+b p21]}
    [z,tp_1+\alpha e_{12}+\beta e_{21}] = (\beta z_{12}-\alpha z_{21})p_1+(\alpha z_{11}-\alpha z_{22}-t z_{12})e_{12}
\end{equation} $$+ (\beta z_{22}-\beta z_{11}+t z_{21})e_{21}+(\alpha z_{21}-\beta z_{12})p_2,$$
combined with \eqref{[z,p1]} and \eqref{eq widehatDelta}, that $\beta \lambda_{12} + \alpha \lambda_{21}= t (\alpha z_{21}- \beta z_{12})$ and $\alpha z_{21}- \beta z_{12}=0$, which gives $\beta \lambda_{12} + \alpha \lambda_{21}=0$.\smallskip

Repeating the above arguments with $\phi = t \xi_1\otimes \xi_1 +\alpha \xi_1\otimes \xi_2\in M_2^*$, $e_{21}$ and $t p_1+ \alpha e_{12}+\beta e_{21}$, we deduce from \eqref{[z,p21]}, \eqref{Delta(p21)} and \eqref{[z,tp1+a p12+b p21]}, that $t \lambda_{11}+\alpha \lambda_{21}= \beta (t z_{12}+ \alpha z_{22}-\alpha z_{11})$, and $t z_{12}+ \alpha z_{22}-\alpha z_{11}=0$ and hence $t \lambda_{11}+\alpha \lambda_{21}=0$.\smallskip

A similar reasoning with $\phi= t \xi_1\otimes \xi_1 + \beta \xi_2\otimes \xi_1\in M_2^*$, $e_{12}$ and $t p_1+ \alpha e_{12}+\beta e_{21}$, gives, via \eqref{[z,p12]}, \eqref{eq widehatDelta} and \eqref{[z,tp1+a p12+b p21]}, that $t \lambda_{11}+ \beta \lambda_{12}=\alpha(-t z_{21} +\beta z_{11} -\beta z_{22})$ and $-t z_{21} +\beta z_{11} -\beta z_{22}=0$. Therefore $t \lambda_{11}+ \beta \lambda_{12}=0$. The equations  $\beta \lambda_{12} + \alpha \lambda_{21}=0$, $t \lambda_{11}+\alpha \lambda_{21}=0$, and $t \lambda_{11}+ \beta \lambda_{12}=0$ imply that $t \lambda_{11}= \beta \lambda_{12}=\alpha \lambda_{21}=0$, which, combined with \eqref{Delta(ap12+p21)}, \eqref{Delta(tp1+a p12)} and \eqref{Delta(tp1+b p21)}, prove that \begin{equation}\label{Delta(tp1+a p12+b p21)}
    \widehat{\Delta}(t p_1+\alpha e_{12}+\beta e_{21})=0,
\end{equation} for every $t,\alpha,\beta\in \mathbb{C}$.\smallskip

Finally, since $$[z,t p_1+\alpha e_{12}+\beta e_{21}+ s p_2]=[z,(t-s) p_1+\alpha e_{12}+\beta e_{21}],$$ for every $z\in M_2$, it follows from the fact that $\widehat{\Delta}$ is a weak-2-local derivation, \eqref{Delta(tp1+a p12+b p21)}, and the Hahn-Banach theorem that $$\widehat{\Delta}(t p_1+\alpha e_{12}+\beta e_{21}+ s p_2)= \widehat{\Delta}((t-s) p_1+\alpha e_{12}+\beta e_{21})=0,$$ for every $t,s,\alpha,\beta\in\mathbb{C}$, which concludes the proof.\end{proof}

The rest of this section is devoted to the study of weak-2-local derivations on $M_n$. For later purposes, we begin with a strengthened version of Lemma \ref{p Delta(p) p}.

\begin{lemma}\label{l almost orthogonality} Let $\Delta: M\to M$ be a weak-2-local projection on a von Neumann algebra $M$. Suppose $p,q$ are orthogonal projections in $M$, and $a$ is an element in $M$ satisfying $p a = ap = q a = aq =0$. Then the  identities:
$$p\Delta (a+ \lambda p +\mu q) q =  p\Delta (\lambda p +\mu q) q, \hbox{ and, } p\Delta (a+ \lambda p ) p = \lambda p\Delta ( p ) p=0,$$ hold for every $\lambda,\mu\in \mathbb{C}.$ Furthermore, if $b$ is another element in $M$, we also have $$q \Delta (b+ \lambda p) q= q \Delta (b) q, \hbox{ and } q \Delta (q b q+ \lambda q) q= q \Delta (q b q) q.$$
\end{lemma}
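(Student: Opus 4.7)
The plan is, for each of the four identities, to test the stated equality against a family of functionals $\phi\in M^*$ of the form $\phi(\cdot)=\omega(e\,\cdot\,f)$, where $e,f\in\{p,q\}$ match the corner $p\Delta(\cdot)q$, $p\Delta(\cdot)p$, or $q\Delta(\cdot)q$ appearing on either side, and $\omega$ is an arbitrary element of $M^*$. Applying the weak-2-local derivation hypothesis to the two points occurring in each identity together with such a $\phi$, one obtains a derivation $D=D_\phi:M\to M$ reproducing $\Delta$ at both points in the sense of Definition \ref{def weak-2-local}. Since $M$ is a von Neumann algebra, Sakai's theorem (cf.\ \cite[Theorem 4.1.6]{Sak}) yields $z=z(\phi)\in M$ with $D=[z,\,\cdot\,]$, so everything reduces to an elementary commutator computation in the relevant corner. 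Because $M^*$ separates points of $M$, the arbitrariness of $\omega$ will upgrade the scalar equalities to the stated operator identities.

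For the first identity, test the pair $(a+\lambda p+\mu q,\,\lambda p+\mu q)$ against $\phi(x)=\omega(pxq)$. Linearity of $D$ collapses the difference of the two tested sides to $\omega(p[z,a]q)$. The assumptions $pa=ap=qa=aq=0$ give $pzaq=pz(aq)=0$ and $pazq=(pa)zq=0$, hence $p[z,a]q=0$, and the two sides agree. For the second identity, test $(a+\lambda p,\,p)$ against $\phi(x)=\omega(pxp)$; the very same cancellation yields $p[z,a]p=0$, leaving $\phi\Delta(a+\lambda p)=\lambda\,\phi\Delta(p)$, and the final vanishing $\lambda p\Delta(p)p=0$ is immediate from Lemma \ref{p Delta(p) p}.

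The last two identities use $\phi(x)=\omega(qxq)$. For $q\Delta(b+\lambda p)q=q\Delta(b)q$, testing the pair $(b+\lambda p,\,b)$ reduces the difference to $\lambda\,\omega(q[z,p]q)$; the orthogonality $pq=qp=0$ forces $qzpq=qz(pq)=0$ and $qpzq=(qp)zq=0$, so $q[z,p]q=0$. For $q\Delta(qbq+\lambda q)q=q\Delta(qbq)q$, testing $(qbq+\lambda q,\,qbq)$ produces a residual term $\lambda\,\omega(q[z,q]q)$, and the trivial identity $q[z,q]q=qzq-qzq=0$ closes the case. The main obstacle is purely organizational, namely pairing each identity with the correct compression $\phi$ and the correct two test points so that the appropriate relation among $a,b,p,q$ kills the residual commutator; once Sakai's innerness and Definition \ref{def weak-2-local} are combined, each individual case is a one-line computation.
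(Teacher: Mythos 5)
Your proposal is correct and follows essentially the same route as the paper: both apply the weak-2-local hypothesis to the relevant pair of points against corner-compressed functionals, use Sakai's theorem to write the derivation as a commutator, observe that the residual commutator term dies in the appropriate corner because of the orthogonality relations, and conclude by separation of points. The only cosmetic difference is that the paper compresses with $\phi=(p+q)\phi(p+q)$ and multiplies by $p$ and $q$ afterwards, whereas you compress directly to each corner.
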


\begin{proof} Clearly, $p+q$ is a projection in $M$. Let $\phi$ any functional in $M_*$ satisfying $\phi = (p+q) \phi (p+q).$ By hypothesis, there exists an element $z_{\phi,\lambda p +\mu q,a+\lambda p+\mu q}\in M$, depending on $\phi,$ $\lambda p+\mu q,$ and $a+\lambda p+\mu q$, such that $$\phi \Delta (a+\lambda p +\mu q) = \phi [z_{\phi,\lambda p+\mu q,a+\lambda p+\mu q} ,a+\lambda p +\mu q],$$ and $$\phi \Delta ( \lambda p +\mu q) = \phi [z_{\phi,\lambda p+\mu q,a+\lambda p+\mu q},\lambda p+\mu q].$$
Since
$$ \phi [z_{\phi,\lambda p+\mu q,a+\lambda p+\mu q} ,a+\lambda p +\mu q] = \phi [z_{\phi,\lambda p+\mu q,a+\lambda p+\mu q},\lambda p+\mu q],$$
we deduce that $\phi (\Delta (a+\lambda p +\mu q)  - \Delta ( \lambda p +\mu q ) )=0$, for every $\phi \in M_*$ with $\phi = (p+q) \phi (p+q).$ Lemma 2.2 in \cite{BenAliPeraltaRamirez} implies that $$(p+q) \Delta (a+\lambda p +\mu q) (p+q) =  (p+q) \Delta (\lambda p +\mu q) (p+q).$$ Multiplying on the left by $p$ and on the right by $q$, we get $p\Delta (a+ \lambda p +\mu q ) q =  p\Delta ( \lambda p +\mu q ) q$. The other statements follow in a similar way.
\end{proof}

\begin{proposition}\label{p finite additivity and linearity on projections} Let $\Delta: M \to M$ be a weak-2-local derivation on a  von Neumann algebra $M$. Then for every family $\{p_1,\ldots,p_n\}$ of mutually orthogonal projections in $M$, and every $\lambda_1,\ldots, \lambda_n$ in $\mathbb{C},$ we have $$\Delta \left(\sum_{j=1}^{n} \lambda_j p_j \right) = \sum_{j=1}^{n} \lambda_j \Delta (p_j).$$
\end{proposition}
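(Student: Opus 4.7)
The plan is to expand both $\Delta(x)$ and $\sum_{k=1}^n \lambda_k \Delta(p_k)$ with respect to the partition of unity $\{p_0, p_1, \ldots, p_n\}$, where $x := \sum_{k=1}^n \lambda_k p_k$, $p_0 := 1 - \sum_{k=1}^n p_k$, and $\lambda_0 := 0$, and to verify that every block $p_i(\,\cdot\,) p_j$ of the two sides agrees. Writing $y = \sum_{i,j=0}^n p_i y p_j$, this reduces the proof to a finite collection of block-wise comparisons, each handled by Lemma \ref{l almost orthogonality} together with Sakai's theorem on the innerness of derivations of von Neumann algebras.

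For the diagonal blocks, the second identity of Lemma \ref{l almost orthogonality} applied with $a = \sum_{k\ne i} \lambda_k p_k$ (annihilated by $p_i$ from both sides) combined with Lemma \ref{p Delta(p) p} yields $p_i \Delta(x) p_i = \lambda_i p_i \Delta(p_i) p_i = 0$, and by the same devices each term $p_i \Delta(p_k) p_i$ on the right-hand side vanishes. For blocks involving $p_0$ and for the cross terms $p_i \Delta(p_k) p_j$ with $k \notin \{i,j\}$, the first identity of Lemma \ref{l almost orthogonality} together with the $1$-homogeneity of $\Delta$ collapses everything to $\lambda_j p_0 \Delta(p_j) p_j$, $\lambda_i p_i \Delta(p_i) p_0$, and $0$ respectively, matching the corresponding blocks on the right. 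The only block that does not simplify immediately is $p_i \Delta(x) p_j$ for $1 \le i \ne j \le n$, which by Lemma \ref{l almost orthogonality} equals $p_i \Delta(\lambda_i p_i + \lambda_j p_j) p_j$. What remains is the key identity
\begin{equation*}
p \Delta(\lambda p + \mu q) q = \lambda\, p \Delta(p) q + \mu\, p \Delta(q) q \qquad (\star)
\end{equation*}
for arbitrary orthogonal projections $p, q \in M$ and complex $\lambda, \mu$.

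To establish $(\star)$ it suffices, by Hahn--Banach, to test against each $\phi \in M_*$ satisfying $\phi(y) = \phi(pyq)$. Fix such a $\phi$. I would apply weak-$2$-locality to the pair $(p, \lambda p + \mu q)$ and $\phi$; by Sakai's theorem the resulting derivation is inner, say $[z, \,\cdot\,]$ for some $z \in M$. A direct calculation using $pq = qp = 0$ and $\phi = p\phi q$ gives $\phi \Delta(p) = \phi[z,p] = -\phi(pzq)$, hence $\phi \Delta(\lambda p + \mu q) = \lambda \phi[z,p] + \mu \phi[z,q] = (\lambda - \mu) \phi \Delta(p)$. Running the symmetric calculation with the pair $(q, \lambda p + \mu q)$ and a new inner derivation $[z', \,\cdot\,]$ yields $\phi \Delta(\lambda p + \mu q) = (\mu - \lambda) \phi \Delta(q)$. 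Comparing and choosing any $\lambda \ne \mu$ forces $\phi\Delta(p) + \phi\Delta(q) = 0$; substituting back gives $\phi \Delta(\lambda p + \mu q) = \lambda\phi\Delta(p) + \mu\phi\Delta(q)$, which is $(\star)$.

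The main obstacle is the proof of $(\star)$: weak-$2$-locality only lets one compare $\Delta$ with a derivation at two prescribed elements simultaneously, and the derivations it produces depend in an uncontrolled way on both the elements and the functional. The key device is the restriction to off-diagonal functionals $\phi = p\phi q$, on which Sakai's theorem forces the unknown inner derivation to collapse to the single scalar $\phi(pzq)$; this scalar then cancels between the two pairings $(p, \lambda p + \mu q)$ and $(q, \lambda p + \mu q)$ and delivers the required linear relation.
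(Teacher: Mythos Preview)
Your proof is correct and follows essentially the same block-decomposition strategy as the paper: both arguments expand $\Delta(\sum_j \lambda_j p_j)$ and $\sum_j \lambda_j \Delta(p_j)$ with respect to the partition $\{r,p_1,\ldots,p_n\}$ (your $p_0$ is the paper's $r$) and match the pieces using Lemma~\ref{l almost orthogonality} and Lemma~\ref{p Delta(p) p}.

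The one point of divergence is the proof of the two-projection identity $(\star)$, which is the paper's equation \eqref{eq p_i+p_j}. The paper obtains it from the \emph{fourth} identity of Lemma~\ref{l almost orthogonality}, namely $q\Delta(qbq+\lambda q)q=q\Delta(qbq)q$, applied twice with $q=p_i+p_k$: first to strip off $\lambda_k(p_i+p_k)$, then to rewrite $\Delta(p_i)=\Delta((p_i+p_k)-p_k)$ in terms of $\Delta(p_k)$. You instead return to the definition of weak-2-locality and Sakai's theorem, testing against off-diagonal functionals $\phi=p\phi q$; the inner derivation then collapses to the single scalar $\phi(pzq)$, and pairing with $p$ and with $q$ separately yields $\phi\Delta(\lambda p+\mu q)=(\lambda-\mu)\phi\Delta(p)=(\mu-\lambda)\phi\Delta(q)$, from which $(\star)$ follows. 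Both routes are short and equally valid; yours is slightly more self-contained (it does not need the fourth identity of Lemma~\ref{l almost orthogonality}), while the paper's keeps all the ``weak-2-local bookkeeping'' inside that lemma. One small clarification worth adding to your write-up: the identities $\phi\Delta(\lambda p+\mu q)=(\lambda-\mu)\phi\Delta(p)$ and $=(\mu-\lambda)\phi\Delta(q)$ hold for \emph{every} pair $(\lambda,\mu)$ (the implementing $z$ changes, but the resulting scalar relation does not), so one may first specialize to, say, $(\lambda,\mu)=(1,0)$ to obtain $\phi\Delta(p)+\phi\Delta(q)=0$, and then substitute back for arbitrary $(\lambda,\mu)$.
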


\begin{proof} Let $p_1,\ldots,p_n$ be mutually orthogonal projections in $M$. First, we observe that, by the last statement in Lemma \ref{l almost orthogonality}, for any $1\leq i,k\leq n,\ i\ne k,$ we have
$$(p_i+p_k) \Delta(\lambda_ip_i+\lambda_kp_k)(p_i+p_k) $$ $$= (p_i+p_k) \Delta((\lambda_i-\lambda_k)p_i + \lambda_k(p_i+p_k))(p_i+p_k) = (p_i+p_k) \Delta((\lambda_i-\lambda_k)p_i)(p_i+p_k) $$ $$=(p_i+p_k) \lambda_i\Delta(p_i)(p_i+p_k)-(p_i+p_k) \lambda_k\Delta(p_i)(p_i+p_k) $$
$$=(p_i+p_k) \lambda_i\Delta(p_i)(p_i+p_k)-(p_i+p_k) \lambda_k\Delta(p_i+ p_k -p_k)(p_i+p_k) $$
$$= (p_i+p_k) \lambda_i\Delta(p_i)(p_i+p_k) + (p_i+p_k) \lambda_k\Delta(p_k)(p_i+p_k),$$
where the last step is obtained by another application of Lemma \ref{l almost orthogonality}. Multiplying on the left hand side by $p_i$ and on the right hand side by $p_k$ we obtain:
\begin{equation}\label{eq p_i+p_j}
  p_i \Delta(\lambda_ip_i\!+\!\lambda_kp_k)p_k = \lambda_ip_i\Delta(p_i)p_k\! +\! \lambda_kp_i \Delta(p_k)p_k,\ (1\leq i,k\leq n,\ i\ne k).
\end{equation}

Let us write $\displaystyle r=1-\sum_{j=1}^n p_j$ and
\begin{equation}\label{eq 0301 2}
  \Delta\left(\sum_{j=1}^n\lambda_jp_j\right) = r\Delta\left(\sum_{j=1}^n\lambda_jp_j\right)r + \sum_{i=1}^n \left(p_i\Delta\left(\sum_{j=1}^n\lambda_jp_j\right)r \right)
\end{equation}
$$+\sum_{k=1}^n \left( r\Delta\left(\sum_{j=1}^n\lambda_jp_j\right)p_k \right)+\sum_{i,k=1}^n \left(p_i\Delta\left(\sum_{j=1}^n\lambda_jp_j\right)p_k\right).$$

Applying Lemma \ref{l almost orthogonality} we get: $\displaystyle r\Delta\left(\sum_{j=1}^n\lambda_jp_j\right)r=0.$ Given $1\leq i\leq n,$ the same Lemma \ref{l almost orthogonality} implies that
\begin{equation}\label{eq 0301 3}
  p_i\Delta\left(\sum_{j=1}^n\lambda_jp_j\right)r=p_i\Delta\left(\sum_{j=1,j\ne i}^n\lambda_jp_j+\lambda_ip_i\right)r = \lambda_ip_i\Delta(p_i)r,
\end{equation}
and similarly
\begin{equation}\label{eq 0301 4}
  r\Delta\left(\sum_{j=1}^n\lambda_jp_j\right)p_i = \lambda_ir\Delta(p_i)p_i,\quad \textrm{and} \quad p_i\Delta\left(\sum_{j=1}^n\lambda_jp_j\right)p_i=0.
\end{equation}

Given $1\leq i,k\leq n,$  $i\ne k$, Lemma \ref{l almost orthogonality} proves that
\begin{equation}\label{eq 0301 5}
  p_i\Delta\left(\sum_{j=1}^n\lambda_jp_j\right)p_k = p_i\Delta\left(\sum_{j=1,j\ne i,k}^n\lambda_jp_j + \lambda_ip_i + \lambda_kp_k\right)p_k
\end{equation}
$$= p_i\Delta(\lambda_ip_i + \lambda_kp_k)p_k = \hbox{(by \eqref{eq p_i+p_j})}= \lambda_ip_i\Delta(p_i)p_k + \lambda_kp_i\Delta(p_k)p_k.$$

We also have: \begin{equation}\label{eq 03032015} \Delta (p_j) = p_j \Delta(p_j) r + r \Delta (p_j) p_j + \sum_{k=1}^{n} p_j \Delta(p_j) p_k + p_k \Delta(p_j) p_j.
\end{equation}

Finally, the desired statement follows from \eqref{eq 0301 2}, \eqref{eq 0301 3}, \eqref{eq 0301 4}, \eqref{eq 0301 5}, and \eqref{eq 03032015}.
\end{proof}

\begin{corollary}\label{c to propo additivity on orthogonal projections} Let $\Delta: M \to M$  be a weak-2-local derivation on a von Neumann algebra. Suppose $a$ and $b$ are elements in $M$ which are written as finite linear complex linear combinations $\displaystyle a= \sum_{i=1}^{m_1} \lambda_{i} p_i$ and $\displaystyle b= \sum_{j=1}^{m_2} \mu_{j} q_j$, where $p_1,\ldots, p_{m_1},q_1,\ldots,q_{m_2}$ are mutually orthogonal projections {\rm(}these hypotheses hold, for example, when $a$ and $b$ are algebraic orthogonal self-adjoint elements in $M${\rm)}. Then $\Delta (a+ b) = \Delta (a) + \Delta (b).$ $\hfill\Box$
\end{corollary}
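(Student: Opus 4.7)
The plan is to derive the corollary directly from Proposition \ref{p finite additivity and linearity on projections} by applying it three times, once to each of the elements $a+b$, $a$, and $b$. The key observation is that the hypothesis of the corollary states that the \emph{combined} family $\{p_1,\ldots,p_{m_1},q_1,\ldots,q_{m_2}\}$ consists of mutually orthogonal projections, so every subfamily is also mutually orthogonal, and in particular the sum $a+b$ is itself a finite complex linear combination of mutually orthogonal projections.

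First I would invoke Proposition \ref{p finite additivity and linearity on projections} applied to the full family, yielding
$$\Delta(a+b) = \Delta\Bigl(\sum_{i=1}^{m_1}\lambda_i p_i + \sum_{j=1}^{m_2}\mu_j q_j\Bigr) = \sum_{i=1}^{m_1}\lambda_i \Delta(p_i) + \sum_{j=1}^{m_2}\mu_j \Delta(q_j).$$
Next, applying the same proposition separately to the subfamilies $\{p_1,\ldots,p_{m_1}\}$ and $\{q_1,\ldots,q_{m_2}\}$ gives
$$\Delta(a)=\sum_{i=1}^{m_1}\lambda_i \Delta(p_i),\qquad \Delta(b)=\sum_{j=1}^{m_2}\mu_j \Delta(q_j).$$
Adding these two identities produces $\Delta(a)+\Delta(b)=\Delta(a+b)$, which is the desired conclusion.

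There is essentially no obstacle here; the only point worth checking explicitly is that the parenthetical remark about algebraic orthogonal self-adjoint elements is justified, since any self-adjoint element with finite spectrum admits a spectral decomposition in terms of mutually orthogonal spectral projections, and algebraic orthogonality of two such elements forces the supports (and hence the spectral projections) of $a$ and $b$ to be mutually orthogonal, so the combined family $\{p_i\}\cup\{q_j\}$ is indeed mutually orthogonal and Proposition \ref{p finite additivity and linearity on projections} applies.
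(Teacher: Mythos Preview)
Your argument is correct and is exactly the intended one: the paper itself gives no proof for this corollary (note the $\Box$ at the end of the statement), precisely because it follows immediately from Proposition~\ref{p finite additivity and linearity on projections} applied to the combined family and to each subfamily, as you have written. Your justification of the parenthetical remark is also accurate.
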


Let $\Delta: M\to M$ be a weak-2-local derivation on a von Neumann algebra. Let $\mathcal{P}(M)$ denote the set of all projections in $M$. Proposition \ref{p finite additivity and linearity on projections} asserts that the mapping $\mu:\mathcal{P}(M)\rightarrow M$, $p\mapsto \mu(p):=\Delta(p)$ is a finitely additive measure on $\mathcal{P}(M)$ in the usual terminology employed around the Mackey-Gleason theorem (cf. \cite{BuWri92}, \cite{Doro}, and \cite{Shers2008}), i.e. $\mu (p+q) = \mu(p) + \mu (q)$, whenever $p$ and $q$ are mutually orthogonal projections in $M$. Unfortunately, we do not know if, the measure $\mu$ is, in general, bounded. \smallskip

We recall some other definitions. Following the usual nomenclature in \cite{Doro,Shers2008,AyuKuday2014} or \cite{KOPR2014}, a scalar or signed measure $\mu: \mathcal{P} (M) \to \mathbb{C}$ is said to be \emph{completely additive} or a \emph{charge} if
\begin{equation}\label{eq completely additive}
\mu\left(\sum\limits_{i\in I} p_i\right) =\sum\limits_{i\in I}\mu(p_i)
\end{equation} for every family  $\{p_i\}_{i\in I}$ of mutually orthogonal projections in $M.$ Where $\displaystyle \sum\limits_{i\in I} p_i$ is the sum of the family $(p_i)$ with respect to the weak$^*$-topology of $M$ (cf. \cite[Page 30]{Sak}), and in the right hand side, the convergence of an uncountable family is understood as summability in the usual sense. The main results in \cite{Doro1990} shows that if $M$ is a von Neumann algebra of type $I$ with no type $I_n$ ($n<\infty$) direct summands and $M$ acts on a separable Hilbert space, then any completely additive measure on $\mathcal{P}(M)$ is bounded. The conclusion remains true when $M$ is a continuous von Neumann algebra (cf. \cite{Doro}, see also \cite{Shers2008}). The next remark shows that is not always true when $M$ is a type $I_n$ factor with $2\leq n<\infty$. \smallskip

\begin{remark}\label{r non boundedness of completely additive measure on PMn}{\rm
In $M_n$ (with $2\leq n<\infty$) every family of non-zero pairwise orthogonal projections is necessarily finite so, every finitely additive measure $\mu$ on $\mathcal{P}(M_n)$ is completely additive. However, the existence of unbounded finitely additive measures on $\mathcal{P}(M_n)$ is well known in literature, see, for example, the following example inspired by \cite{Wright1998}. By the arguments at the end of the proof of \cite[Theorem 3.1]{Wright1998}, we can always find a countable infinite set of projections $\{p_n : n\in \mathbb{N}\}$ which is linearly independent over $\mathbb{Q}$, and we can extend it, via Zorn's lemma, to a Hamel base $\{ z_{j} : j\in \Lambda \}$ for $(M_n)_{sa}$ over $\mathbb{Q}$. Clearly, every element in $M_n$ can be written as a finite $\mathbb{Q}\oplus i \mathbb{Q}$-linear combination of elements in this base. If we define a $\mathbb{Q}\oplus i \mathbb{Q}$-linear mapping $\mu : M_n \to \mathbb{C}$ given by $$\mu (z_j) := \left\{\begin{array}{cc}
            (n+1), & \hbox{if $z_j = p_n$ for some natural number $n$};\\
            0, & \hbox{otherwise.}
          \end{array}\right.
 $$ Clearly, $\mu|_{\mathcal{P} (M_n)} : \mathcal{P} (M_n) \to \mathbb{C}$ is an unbounded completely additive measure.}
\end{remark}

We shall show later that the pathology exhibited in the previous remark cannot happen for the measure $\mu$ determined by a weak-2-local $^*$-derivation on $M_n$ (cf. Proposition \ref{p boundedness of Delta}). The case $n=2$ was fully treated in Theorem \ref{t w-2-local derivations on M2}.

\begin{proposition}\label{p additivity on M3} Let $\Delta: M_3\to M_3$ be a weak-2-local $^*$-derivation. Suppose $p_1,p_2, p_3$ are mutually orthogonal minimal projections in $M_3$, $e_{k3}$ is the unique minimal partial isometry in $M_3$ satisfying $e_{k3}^* e_{k3} = p_{3}$ and $e_{k3} e_{k3}^* = p_k$ {\rm(}$k=1,2${\rm)}. Let us assume that $\Delta (p_j) = \Delta (e_{k3})=0$, for every $j=1,2,3,$ $k=1,2.$ Then $$\displaystyle \Delta \left(\sum_{j=1}^{3} \lambda_{j} p_j + \sum_{k=1}^{2} \mu_k e_{k3}\right) =0,$$ for every $\lambda_1,\lambda_2,\lambda_3,\mu_1,\mu_2$ in $\mathbb{C}$.
\end{proposition}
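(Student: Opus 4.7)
The plan is to write $\Delta(a)=\sum_{i,j=1}^{3}\alpha_{ij}e_{ij}$, where $a=\sum_j\lambda_j p_j+\sum_k\mu_k e_{k3}$, and to prove $\alpha_{ij}=0$ for every $(i,j)$. Since every derivation on $M_3$ is inner by Sakai's theorem and every $^*$-derivation is implemented by a skew-adjoint element, weak-$2$-locality supplies, for each pair $(x,y)$ and each $\phi\in M_3^*$, a skew-adjoint $z\in M_3$ with $\phi\Delta(x)=\phi[z,x]$ and $\phi\Delta(y)=\phi[z,y]$. When $\Delta(y)=0$, the second equation becomes a linear constraint on the entries of $z$; and by $\Delta^{\sharp}=\Delta$ we have $\Delta(e_{3k})=\Delta(e_{k3})^*=0$, so the list of ``null'' second arguments available to us is $\{p_1,p_2,p_3,e_{13},e_{23},e_{31},e_{32}\}$. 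I will parametrise $[z,p_j]$, $[z,e_{k3}]$, $[z,e_{3k}]$, and $[z,a]$ explicitly in terms of the entries $z_{ij}$, reducing the problem to linear algebra on these entries.

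For the easy entries I would mimic Steps~I and~III of the proof of Theorem~\ref{t w-2-local derivations on M2}. Pairing $(a,e_{k3})$ with $\phi_{kk}(x)=(x\xi_k\mid\xi_k)$ forces $z_{3k}=0$ and hence $\alpha_{kk}=-\mu_k z_{3k}=0$ for $k=1,2$; Lemma~\ref{l trace zero} then gives $\alpha_{33}=0$. Similarly, pairing $(a,p_k)$ with $\phi_{3k}$ forces $z_{3k}=0$ and so $\alpha_{3k}=(\lambda_k-\lambda_3)z_{3k}=0$ for $k=1,2$, disposing of the diagonal and of the bottom row.

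The four residual entries $\alpha_{12},\alpha_{21},\alpha_{13},\alpha_{23}$ are each a linear combination of \emph{two} independent $z$-entries, whereas a single pair contributes only one scalar constraint; this is the core difficulty. The key trick is a \emph{compound} functional. For instance, $\phi_{12}[z,a]=(\lambda_2-\lambda_1)z_{12}-\mu_1 z_{32}$; pairing $(a,p_2)$ with $\phi=c\phi_{12}+d\phi_{32}$ gives the constraint $cz_{12}+dz_{32}=0$, and using the already-established $\alpha_{32}=0$ reduces the matching equation to $c\alpha_{12}=z_{32}\bigl[d(\lambda_1-\lambda_3)-c\mu_1\bigr]$; the choice $(c,d)=\bigl(1,\mu_1/(\lambda_1-\lambda_3)\bigr)$ annihilates the bracket and hence $\alpha_{12}$, provided $\lambda_1\neq\lambda_3$. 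For the degenerate case $\lambda_1=\lambda_3$ I would first reduce to $\lambda_1=\lambda_3=0$ using the translation invariance $\Delta(x+c\cdot 1)=\Delta(x)$ (which follows from Lemma~\ref{l Delta(1)=0}, Lemma~\ref{l Delta(1-x)+Delta(x)} and $1$-homogeneity), and then exploit pair $(a,e_{13})$ with $\phi=c\phi_{12}+d\phi_{13}$ to derive a linear relation linking $\alpha_{12}$ to $\alpha_{13}$; an independent calculation of $\alpha_{13}=0$ via pair $(a,e_{13})$ with $\phi_{13}$ (which forces $z_{11}=z_{33}$) together with a parallel compound-functional argument closes this subcase. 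The remaining entries $\alpha_{21}, \alpha_{13}, \alpha_{23}$ are handled by symmetric choices of pair and compound functional.

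The main obstacle is precisely this mismatch: two degrees of freedom in each residual entry against one scalar constraint per pair, which forces a delicate case analysis whenever the ``cancellation coefficients'' degenerate---most conspicuously in the sub-case $\lambda_1=\lambda_3$ (with $\mu_1\ne 0$) for $\alpha_{12}$ and its analogues for the other three entries. Carrying out this analysis cleanly, without circularity when one hard entry is invoked to close another, and without losing information when passing to compound functionals, is the technical heart of the argument.
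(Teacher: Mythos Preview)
Your approach differs substantially from the paper's, and there is a genuine gap in the degenerate subcases that you flag but do not close.

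The easy entries are fine: your computations for $\alpha_{11},\alpha_{22},\alpha_{33},\alpha_{31},\alpha_{32}$ are correct, and the generic case ($\lambda_1\neq\lambda_3$) of $\alpha_{12}$ via the compound functional $\phi_{12}+d\phi_{32}$ paired against $p_2$ is valid. The problem is the degenerate branch. You assert that ``an independent calculation of $\alpha_{13}=0$ via pair $(a,e_{13})$ with $\phi_{13}$ (which forces $z_{11}=z_{33}$)'' will supply the missing input, but this is not true: with $\lambda_1=\lambda_3=0$ one has $[z,a]_{13}=\mu_1(z_{11}-z_{33})+\mu_2 z_{12}$, so the constraint $z_{11}=z_{33}$ leaves $\alpha_{13}=\mu_2 z_{12}$ with $z_{12}$ completely free. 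Your compound pair $(a,e_{13})$ with $c\phi_{12}+d\phi_{13}$ then gives only $c\alpha_{12}+d\alpha_{13}=(c\lambda_2+d\mu_2)z_{12}$, again with $z_{12}$ free; at best you extract one homogeneous relation $\mu_2\alpha_{12}=\lambda_2\alpha_{13}$, not $\alpha_{12}=0$. The analogous obstruction arises for $\alpha_{21},\alpha_{13},\alpha_{23}$, and the risk of circularity you mention is real: each ``hard'' entry seems to require another one already known to vanish.

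The paper sidesteps this entirely by a different mechanism. Rather than attacking the entries of $\Delta(a)$ for the full element $a$, it first enlarges the stock of null second arguments beyond $\{p_j,e_{k3},e_{3k}\}$. Using Corollary~\ref{c to propo additivity on orthogonal projections} (additivity on orthogonal algebraic elements) together with Lemma~\ref{l linearity on Msa} (the identity $\Delta(x+iy)=\Delta(x)+i\Delta(y)$ for self-adjoint $x,y$, which is where the $^*$-derivation hypothesis enters decisively), it shows successively that $\Delta$ vanishes on elements such as $\lambda_2 p_2+\mu_1 e_{13}$, then $\lambda_2 p_2+\mu_2 e_{23}$ (via Proposition~\ref{p restriction} and the $M_2$ theorem), then $\lambda_1 p_1+\lambda_2 p_2+\mu_2 e_{23}$, and so on. With these richer anchors available as second arguments, each remaining entry is killed by a \emph{single} functional, and no degenerate case analysis is needed. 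Your scheme never invokes Lemma~\ref{l linearity on Msa} or Corollary~\ref{c to propo additivity on orthogonal projections}; without them the compound-functional method does not seem to generate enough independent constraints in the degenerate regimes.
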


\begin{proof} Along this proof we write $M= M_3$. For each $i\neq j$ in $\{1,2, 3\}$, we shall denote by $e_{ij}$ the unique minimal partial isometry in $M$ satisfying $e_{ij}^* e_{ij} = p_j$ and $e_{ij} e_{ij}^* = p_i$, while the symbol $\phi_{ij}$ will denote the unique norm-one functional in $M^*$ satisfying $\phi_{ij} (e_{ij})=1$. In order to simplify the notation with a simple matricial notation, we shall assume that
$$
p_1=\left(
                                                                                          \begin{array}{ccc}
                                                                                            1 & 0 & 0 \\
                                                                                            0 & 0 & 0 \\
                                                                                            0 & 0 & 0 \\
                                                                                          \end{array}
                                                                                        \right),\
p_2=\left(
                                                                                          \begin{array}{ccc}
                                                                                            0 & 0 & 0 \\
                                                                                            0 & 1 & 0 \\
                                                                                            0 & 0 & 0 \\
                                                                                          \end{array}
                                                                                        \right),\
\hbox{and}\
p_3=\left(
                                                                                          \begin{array}{ccc}
                                                                                            0 & 0 & 0 \\
                                                                                            0 & 0 & 0 \\
                                                                                            0 & 0 & 1 \\
                                                                                          \end{array}
                                                                                        \right),
$$ however the arguments do not depend on this representation.\smallskip

\emph{Step I.} We claim that, under the hypothesis of the lemma, \begin{equation}\label{eq 2 1702} \Delta (\lambda_2 p_2 + \mu_1 e_{13}) = 0 = \Delta (\lambda_1 p_1 + \mu_2 e_{23}),
\end{equation} for every $\lambda_1,\lambda_2,\mu_1,\mu_2\in \mathbb{C}$. We shall only prove the first equality, the second one follows similarly. Indeed, Corollary \ref{c to propo additivity on orthogonal projections} implies that $$\Delta (\lambda_2 p_2 + \mu_1 e_{13} \pm \overline{\mu_1} e_{31}) = \Delta (\lambda_2 p_2 ) + \Delta ( \mu_1 e_{13} \pm \overline{\mu_1} e_{31})= \Delta ( \mu_1 e_{13} \pm \overline{\mu_1} e_{31}).$$ Having in mind that $\Delta$ is a weak-2-local $^*$-derivation, we apply Lemma \ref{l linearity on Msa} to deduce that $$\Delta ( \mu_1 e_{13} \pm \overline{\mu_1} e_{31}) = \Delta ( \mu_1 e_{13}) \pm \Delta ( \mu_1 e_{13})^* =0,$$ which proves that $\Delta (\lambda_2 p_2 + \mu_1 e_{13} \pm \overline{\mu_1} e_{31}) = 0,$ for every  $\mu_1,\lambda_2\in \mathbb{C}$. Another application of Lemma \ref{l linearity on Msa} proves that $$\Delta (\lambda_2 p_2 + \mu_1 e_{13}) = \Delta (\Re\hbox{e}(\lambda_2) p_2 + \frac{\mu_1}{2} e_{13} + \frac{\overline{\mu_1}}{2} e_{31})$$ $$+ \Delta ( i \Im\hbox{m}(\lambda_2) p_2+ \frac{\mu_1}{2} e_{13} - \frac{\overline{\mu_1}}{2} e_{31}) =0.$$

\emph{Step II.} We shall prove now that \begin{equation}\label{eq 1 2402} \Delta (\lambda_2 p_2 + \mu_2 e_{23}) = 0 = \Delta (\lambda_1 p_1 + \mu_1 e_{13}),
\end{equation} for every $\lambda_1,\lambda_2,\mu_1,\mu_2\in \mathbb{C}.$ Proposition \ref{p restriction} witnesses that $$(p_2+p_3) \Delta (p_2+p_3)|_{(p_2+p_3)M (p_2+p_3)} : (p_2+p_3) M(p_2+p_3) \to (p_2+p_3)M(p_2+p_3)$$ is a weak-2-local $^*$-derivation. Since $(p_2+p_3)M(p_2+p_3)\equiv M_2,$ Theorem \ref{t w-2-local derivations on M2} implies that $(p_2+p_3) \Delta (p_2+p_3)|_{(p_2+p_3)M (p_2+p_3)}$ is a linear $^*$-derivation. Therefore, $$(p_2+p_3) \Delta(\lambda_2 p_2 + \mu_2 e_{23})  (p_2+p_3) = \lambda_2 (p_2+p_3) \Delta(p_2 )  (p_2+p_3) $$ $$+  \mu_2 (p_2+p_3) \Delta( e_{23})  (p_2+p_3) =0 ,$$ by hypothesis. This shows that $$\Delta (\lambda_2 p_2 + \mu_2 e_{23}) = \left(
                                                                     \begin{array}{ccc}
                                                                       \omega_{11} & \omega_{12} & \omega_{13}\\
                                                                       \omega_{21} & 0 & 0 \\
                                                                       \omega_{31} & 0 & 0 \\
                                                                     \end{array}
                                                                   \right),$$ where $\omega_{ij}\in \mathbb{C}.$\smallskip

The identity
\begin{equation}\label{eq 1 0303}
\left[z, \lambda_2 p_{2}\! +\! \mu_2 e_{23}\right]\! =\! \left(
                                                           \begin{array}{ccc}
                                                             0 & \lambda_2 z_{12} & \mu_2 z_{12} \\
                                                             \!\!- \lambda_2 z_{21}\! -\! \mu_2 z_{31}\!\! & -\mu_2 z_{32} & \! \mu_2 (z_{22}\!-\!z_{33})\! -\! \lambda_2 z_{23}\!\!  \\
                                                             0 & \lambda_2 z_{32} & \mu_2 z_{32}  \\
                                                           \end{array}
                                                         \right),
\end{equation}
holds for every matrix $z\in M$. Taking the functional $\phi_{11}$ (respectively $\phi_{31}$) in $M^*$, we deduce, via the weak-2-local property of $\Delta$ at $\lambda_2 p_{2} + \mu_2 e_{23}$, that $\omega_{11} = 0$ (respectively $\omega_{31} = 0$). \smallskip

The weak-2-local behavior of $\Delta$ at the points $\lambda_2 p_{2} + \mu_2 e_{23}$ and $\mu_2 e_{23}$ and the functional $\phi_{13}$, combined with \eqref{eq 1 0303}, and
\begin{equation}\label{eq 2 0303}
\left[z, \mu_2 e_{23}\right] = \left(
                                                           \begin{array}{ccc}
                                                             0  & 0 & \mu_2 z_{12} \\
                                                             - \mu_2 z_{31} & -\mu_2 z_{32}  & \mu_2 (z_{22} - z_{33}) \\
                                                             0 & 0 & \mu_2 z_{32} \\
                                                           \end{array}
                                                         \right),
\end{equation}
show that $\omega_{13}=0.$ \smallskip

The identity 
$$\left[z, - \lambda_2 p_{1} + \mu_2 e_{23}\right] = \left(
                                                           \begin{array}{ccc}
                                                             0  & \lambda_2 z_{12}  & \mu_2 z_{12} + \lambda_2 z_{13} \\
                                                             - \lambda_2 z_{21} - \mu_2 z_{31} & -\mu_2 z_{32}  & \mu_2 (z_{22} - z_{33}) \\
                                                             - \lambda_2 z_{31} & 0 & \mu_2 z_{32} \\
                                                           \end{array}
                                                         \right),
$$ combined with \eqref{eq 2 1702}, \eqref{eq 1 0303}, and the weak-2-local property of $\Delta$ at $\lambda_2 p_2 + \mu_2 e_{23}$, $- \lambda_2 p_{1} + \mu_2 e_{23}$ and the functional $\phi_{12}$ (respectively $\phi_{21}$), we obtain $\omega_{12} =0$ (respectively $\omega_{21}=0$), which means that $\Delta (\lambda_2 p_2 + \mu_2 e_{23}) = 0.$ The statement concerning $\Delta (\lambda_1 p_1 + \mu_1 e_{13})$ follows similarly.\smallskip

\emph{Step III.} We claim that \begin{equation}\label{eq 2 2402} \Delta (\lambda_1 p_1+ \lambda_2 p_2 + \mu_2 e_{23}) = 0 = \Delta (\lambda_1 p_1+ \lambda_2 p_2 + \mu_1 e_{13}) ,
\end{equation} for every $\lambda_1,\lambda_2,\mu_1,\mu_2\in \mathbb{C}.$ As before we shall only prove the first equality. Indeed, Corollary \ref{c to propo additivity on orthogonal projections} assures that $$ \Delta (\lambda_1 p_1+ p_2 + \mu_2 e_{23} + \overline{\mu_2} e_{32})  = \lambda_1 \Delta ( p_1) + \Delta ( p_2 + \mu_2 e_{23} + \overline{\mu_2} e_{32})=0,$$ where in the last equality we apply the hypothesis, \eqref{eq 1 2402} and Lemma \ref{l linearity on Msa}. Another application of Lemma \ref{l linearity on Msa} proves that $\Delta (\lambda_1 p_1+ p_2 + \mu_2 e_{23}) = 0.$ The desired statement follows from the 1-homogeneity of $\Delta$.\smallskip

\emph{Step IV.} In this step we show that \begin{equation}\label{eq 3 2402} \Delta (\lambda_1 p_1+ \lambda_2 p_2 + \mu_1 e_{13} + \mu_2 e_{23})\! = \! (1-p_3)\Delta (\lambda_1 p_1+ \lambda_2 p_2\! + \mu_1 e_{13} + \mu_2 e_{23})p_3 ,
\end{equation} for every $\lambda_1, \lambda_2, \mu_1, \mu_2\in \mathbb{C}.$ \smallskip

Since for any $z=(z_{ij})\in M$, we have
$$\left[z, \mu_1 e_{13} \right] = \left(
                                    \begin{array}{ccc}
                                      - \mu_1 z_{31} & - \mu_1 z_{32} & \mu_1(z_{11}-z_{33}) \\
                                      0  & 0  &  \mu_1 z_{21} \\
                                      0  & 0 & \mu_1 z_{31}  \\
                                    \end{array}
                                  \right),
$$ using appropriate functionals in $M^*$, we deduce, via the weak-2-local property of $\Delta$ at $w_1=\lambda_1 p_1+ \lambda_2 p_2 + \mu_1 e_{13} + \mu_2 e_{23}$ and $w_2=\lambda_1 p_1+ \lambda_2 p_2 + \mu_2 e_{23}$ ($w_1-w_2=\mu_1 e_{13}$), combined with \eqref{eq 2 2402}, that $$(p_2+p_3)\Delta (\lambda_1 p_1+ \lambda_2 p_2 + \mu_1 e_{13} + \mu_2 e_{23})(p_1+p_2)=0.$$

Considering the identity \eqref{eq 2 0303} and repeating the above arguments at the points $\lambda_1 p_1+ \lambda_2 p_2 + \mu_1 e_{13} + \mu_2 e_{23}$ and $\lambda_1 p_1+ \lambda_2 p_2 + \mu_1 e_{13},$ we show that $$p_1\Delta (\lambda_1 p_1+ \lambda_2 p_2 + \mu_1 e_{13} + \mu_2 e_{23})(p_1+p_2)=0$$
The statement in the claim \eqref{eq 3 2402} follows from the fact that $\textrm{tr}\,\Delta (\lambda_1 p_1+ \lambda_2 p_2 + \mu_1 e_{13} + \mu_2 e_{23})=0.$ \smallskip

\emph{Step V.} We claim that, \begin{equation}\label{eq Delta at peirce 1 vanishes} \Delta(\mu_1 e_{13} + \mu_2 e_{23}) =0,
\end{equation} for every $\mu_1,\mu_2$ in $\mathbb{C}$. By \eqref{eq 3 2402} $$\Delta (\mu_1 e_{13} + \mu_2 e_{23})=\left(
                                                                             \begin{array}{ccc}
                                                                               0 & 0 & \delta_{13} \\
                                                                               0 & 0 & \delta_{23} \\
                                                                               0 & 0 & 0 \\
                                                                             \end{array}
                                                                           \right),
$$ where $\delta_{ij}\in \mathbb{C}.$\smallskip

Let $\phi = \phi_{12}+\phi_{13}.$ It is not hard to see that $$\phi[z, \mu_1 e_{13} + \mu_2 e_{23}]=\phi[z, \mu_1 e_{13} + \mu_2 p_2].$$
Considering this identity, the equality in \eqref{eq 2 1702}, and the weak-2-local property of $\Delta$ at $\mu_1 e_{13} + \mu_2 e_{23}$ and $\mu_1 e_{13} + \mu_2 p_2$, we prove that $\delta_{13}=0.$ Repeating the same argument with $\phi = \phi_{21}+\phi_{23},$ $\mu_1 e_{13} + \mu_2 e_{23}$ and $\mu_1 p_{1} + \mu_2 e_{23},$ we obtain $\delta_{23}=0.$ \smallskip

\emph{Step VI.} We claim that \begin{equation}\label{eq 4 2402} \Delta (\lambda_2 p_2+ \mu_1 e_{13} + \mu_2 e_{23})=0 = \Delta (\lambda_1 p_1+ \mu_1 e_{13} + \mu_2 e_{23}),
\end{equation} for every $\lambda_1, \lambda_2, \mu_1, \mu_2\in \mathbb{C}.$ \smallskip

As in the previous steps, we shall only prove the first equality. By \eqref{eq 3 2402} $$\Delta (\lambda_2 p_2+ \mu_1 e_{13} + \mu_2 e_{23})=\left(
                                                                             \begin{array}{ccc}
                                                                               0 & 0 & \xi_{13} \\
                                                                               0 & 0 & \xi_{23} \\
                                                                               0 & 0 & 0 \\
                                                                             \end{array}
                                                                           \right),
$$ where $\xi_{ij}\in \mathbb{C}.$\smallskip

Since for any matrix $z=(z_{ij})\in M$ we have $$\phi_{13}\left[z, \lambda_2 p_2 + \mu_1 e_{13} + \mu_2 e_{23} \right]=\phi_{13}\left[z, \mu_1 e_{13} + \mu_2 e_{23} \right],$$ the
weak-2-local behavior of $\Delta$ at $\lambda_2 p_2 + \mu_1 e_{13} + \mu_2 e_{23}$ and $\mu_1 e_{13} + \mu_2 e_{23}$, combined with \eqref{eq Delta at peirce 1 vanishes}, shows that $\xi_{13}=0.$ Let $\phi=\phi_{21}+\phi_{23}$. It is easy to see that $$\phi\left[z, \lambda_2 p_2 + \mu_1 e_{13} + \mu_2 e_{23} \right]=\phi\left[z, \mu_1 p_1 + \lambda_2 p_2 + \mu_2 e_{23} \right].$$
Thus, weak-2-local property of $\Delta$ at $\lambda_2 p_2 + \mu_1 e_{13} + \mu_2 e_{23}$ and $\mu_1 p_1 + \lambda_2 p_2 + \mu_2 e_{23}$ and \eqref{eq 2 2402} show that $\xi_{23}=0,$ and hence $\Delta (\lambda_2 p_2+ \mu_1 e_{13} + \mu_2 e_{23})=0.$\smallskip

\emph{Step VII.} We shall prove that \begin{equation}\label{eq 5 2402}\Delta \left(\sum_{j=1}^{2} \lambda_{j} p_j + \sum_{k=1}^{2} \mu_k e_{k3}\right) =0,
 \end{equation} for every $\lambda_1,\lambda_2,\mu_1,\mu_2$ in $\mathbb{C}$. By \eqref{eq 3 2402} $$\Delta \left(\sum_{j=1}^{2} \lambda_{j} p_j + \sum_{k=1}^{2} \mu_k e_{k3}\right)=\left(
                                                                             \begin{array}{ccc}
                                                                               0 & 0 & \gamma_{13} \\
                                                                               0 & 0 & \gamma_{23} \\
                                                                               0 & 0 & 0 \\
                                                                             \end{array}
                                                                           \right),
$$ where $\gamma_{ij}\in \mathbb{C}.$\smallskip

Given $z=(z_{ij})\in M$ we have $$\phi_{13}\left[z, \sum_{j=1}^{2} \lambda_{j} p_j + \sum_{k=1}^{2} \mu_k e_{k3} \right]=\phi_{13}\left[z,  \lambda_{1} p_1 + \sum_{k=1}^{2} \mu_k e_{k3} \right],$$ and
$$\phi_{23}\left[z, \sum_{j=1}^{2} \lambda_{j} p_j + \sum_{k=1}^{2} \mu_k e_{k3} \right]=\phi_{23}\left[z,  \lambda_{2} p_2 + \sum_{k=1}^{2} \mu_k e_{k3} \right].$$ Then the
weak-2-local behavior of $\Delta$ at $\displaystyle \sum_{j=1}^{2} \lambda_{j} p_j + \sum_{k=1}^{2} \mu_k e_{k3}$ and $ \displaystyle \lambda_{1} p_1 + \sum_{k=1}^{2} \mu_k e_{k3}$ (respectively, $ \displaystyle \lambda_{2} p_2 + \sum_{k=1}^{2} \mu_k e_{k3}$), combined with \eqref{eq 4 2402}, imply that $\gamma_{13}=0$ (respectively, $\gamma_{23}=0$).\smallskip

Finally, for $\lambda_3\neq 0$, we have
$$\Delta \left(\sum_{j=1}^{3} \lambda_{j} p_j + \sum_{k=1}^{2} \mu_k e_{k3}\right) = \Delta \left(\lambda_3 1 + \sum_{j=1}^{2} (\lambda_{j}-\lambda_3) p_j + \sum_{k=1}^{2} \mu_k e_{k3}\right)$$
$$= \lambda_3  \Delta \left( 1 + \lambda_3^{-1} \sum_{j=1}^{2} (\lambda_{j}-\lambda_3) p_j + \lambda_3^{-1} \sum_{k=1}^{2} \mu_k e_{k3}\right)=\hbox{(by Lemma \ref{l Delta(1-x)+Delta(x)})}$$ $$= \lambda_3  \Delta \left(  \lambda_3^{-1} \sum_{j=1}^{2} (\lambda_{j}-\lambda_3) p_j + \lambda_3^{-1} \sum_{k=1}^{2} \mu_k e_{k3}\right) = \hbox{(by \eqref{eq 5 2402})} = 0,$$ for every $\lambda_1,\lambda_2, \mu_1,\mu_2$ in $\mathbb{C}$.
\end{proof}

\begin{proposition}\label{p boundedness of Delta 1} Let $\Delta: M_n\to M_n$ be a weak-2-local $^*$-derivation, where $n\in \mathbb{N}$, $2\leq n$. Suppose $p_1,\ldots, p_n$ are mutually orthogonal minimal projections in $M_n$, $q= p_1+\ldots+p_{n-1}$, $\lambda_1,\ldots, \lambda_{n}$ are complex numbers, and $a$ is an element in $M_n$ satisfying $a = q a p_n $. Then $$\Delta \left(\sum_{j=1}^{n} \lambda_j p_j + a\right) = \Delta \left(\sum_{j=1}^{n} \lambda_j p_j \right)+ \Delta (a) = \sum_{j=1}^{n} \lambda_j \Delta \left(  p_j \right) +  \Delta(a),$$ and the restriction of $\Delta $ to $q M_n p_{n} $ is linear. More concretely, there exists $w_0\in M_n,$ depending on $p_1,\ldots, p_n$, satisfying $w_0^*= -w_0$ and $$\Delta \left(\sum_{j=1}^{n} \lambda_j p_j + a\right) = \left[w_0,  \sum_{j=1}^{n} \lambda_j p_j + a\right],$$ for every $\lambda_1,\ldots, \lambda_{n}$ and $a$ as above.
\end{proposition}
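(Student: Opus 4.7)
The plan is to reduce, via an inner $^*$-derivation, to a weak-$2$-local $^*$-derivation that vanishes on a convenient generating set, and then to extend the seven-step scheme used in the proof of Proposition \ref{p additivity on M3}. Concretely, I fix minimal partial isometries $e_{k,n}\in p_kM_np_n$ with $e_{k,n}^*e_{k,n}=p_n$ and $e_{k,n}e_{k,n}^*=p_k$ for $k=1,\ldots,n-1$, so that $qM_np_n=\mathrm{span}\{e_{1,n},\ldots,e_{n-1,n}\}$, and aim to construct a skew-adjoint $w_0\in M_n$ with $[w_0,p_j]=\Delta(p_j)$ for every $j$ and $[w_0,e_{k,n}]=\Delta(e_{k,n})$ for every $k<n$. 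By Lemma \ref{l sharp}$(a)$ the map $\widetilde{\Delta}:=\Delta-[w_0,\cdot]$ is then again a weak-$2$-local $^*$-derivation which, by construction, vanishes on $p_1,\ldots,p_n$ and on $e_{1,n},\ldots,e_{n-1,n}$.

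\emph{Construction of $w_0$.} Since $\Delta=\Delta^\sharp$, each $\Delta(p_j)$ is self-adjoint and, by Lemma \ref{p Delta(p) p}, satisfies $p_j\Delta(p_j)p_j=0$ together with $(1-p_j)\Delta(p_j)(1-p_j)=0$. Setting $\alpha_{ij}:=p_i\Delta(p_j)p_j$ for $i\neq j$, the additivity $\Delta(p_i+p_j)=\Delta(p_i)+\Delta(p_j)$ from Proposition \ref{p finite additivity and linearity on projections}, combined with $(p_i+p_j)\Delta(p_i+p_j)(p_i+p_j)=0$, gives the skew-adjointness identity $\alpha_{ij}+\alpha_{ji}^*=0$. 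Hence $w_0':=\sum_{i\neq j}\alpha_{ij}$ is skew-adjoint and $[w_0',p_j]=\Delta(p_j)$ for every $j$. A diagonal skew-adjoint correction $w_0'':=\sum_{j=1}^n\nu_j p_j$ with $\nu_j\in i\mathbb{R}$ must then be added so that $[w_0'+w_0'',e_{k,n}]=\Delta(e_{k,n})$ for every $k<n$; this requires showing that $\Delta(e_{k,n})-[w_0',e_{k,n}]$ is a pure-imaginary scalar multiple of $e_{k,n}$. Both the shape of the residual and the pure-imaginary nature of the scalar follow from applying the weak-$2$-local hypothesis at pairs such as $(p_j,e_{k,n})$ and $(e_{l,n},e_{k,n})$ against functionals supported on single matrix units, in direct analogy with Steps II--III of the proof of Theorem \ref{t w-2-local derivations on M2}. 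Taking $\nu_n=0$ and $\nu_k$ equal to the resulting scalar then defines $w_0:=w_0'+w_0''$.

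\emph{Vanishing of $\widetilde{\Delta}$.} With $\widetilde{\Delta}(p_j)=\widetilde{\Delta}(e_{k,n})=0$ in hand, I adapt the seven-step scheme of Proposition \ref{p additivity on M3}. When $j\neq k,n$, Corollary \ref{c to propo additivity on orthogonal projections} and Lemma \ref{l linearity on Msa} give $\widetilde{\Delta}(\lambda_j p_j+\mu_k e_{k,n})=0$, since $\mu_k e_{k,n}\pm\overline{\mu_k}e_{k,n}^*$ decomposes as a $\mathbb{C}$-linear combination of projections in the $(p_k+p_n)$-corner and is therefore algebraically orthogonal to $p_j$. When $j\in\{k,n\}$, Proposition \ref{p restriction} and Theorem \ref{t w-2-local derivations on M2} show that the restriction of $\widetilde{\Delta}$ to $(p_k+p_n)M_n(p_k+p_n)\cong M_2$ is a linear $^*$-derivation that vanishes on the corner, after which the residual off-corner entries are killed via weak-$2$-locality against functionals $\phi_{rs}$ and Lemma \ref{l trace zero}, exactly as in Step II of Proposition \ref{p additivity on M3}. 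Iterating with Proposition \ref{p additivity on M3} applied to the $3\times 3$ corners $(p_i+p_j+p_n)M_n(p_i+p_j+p_n)\cong M_3$ handles sums involving several projections and partial isometries, and yields $\widetilde{\Delta}(\sum_{j<n}\lambda_j p_j+a)=0$ for every $a\in qM_np_n$. Finally, the coefficient $\lambda_n$ is absorbed via $1$-homogeneity and Lemma \ref{l Delta(1-x)+Delta(x)} through the identity
\[
\widetilde{\Delta}\!\left(\sum_{j=1}^n\lambda_j p_j+a\right)=\lambda_n\,\widetilde{\Delta}\!\left(\lambda_n^{-1}\!\!\sum_{j=1}^{n-1}(\lambda_j-\lambda_n)p_j+\lambda_n^{-1}a\right),
\]
the case $\lambda_n=0$ being already covered.

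\emph{Main obstacle.} The delicate point is the construction of $w_0$: one must verify that the residual $\Delta(e_{k,n})-[w_0',e_{k,n}]$ has the precise form $\nu_k e_{k,n}$ with $\nu_k$ pure imaginary and, crucially, that these $\nu_k$ can be chosen consistently for all $k<n$ from the data provided by the weak-$2$-local hypothesis. After $w_0$ is in hand, the vanishing of $\widetilde{\Delta}$ on the stated set is a systematic, if lengthy, case-by-case adaptation of Proposition \ref{p additivity on M3}.
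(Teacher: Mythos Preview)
Your construction of $w_0$ is essentially the paper's construction of $z_0+z_1$: the off-diagonal part $w_0'$ coincides with the paper's $z_0$, and the diagonal correction $w_0''$ with the paper's $z_1$. One small inaccuracy: the pure-imaginary nature of $\nu_k$ does \emph{not} follow ``in direct analogy with Steps II--III of Theorem~\ref{t w-2-local derivations on M2}'', since that theorem treats weak-$2$-local derivations (not $^*$-derivations) and no such constraint appears there. The argument you need is that the local derivation is $[z,\cdot]$ with $z^*=-z$, so testing $\phi_{kn}$ gives $\nu_k=z_{kk}-z_{nn}\in i\mathbb{R}$.

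The substantive divergence is in the vanishing of $\widetilde{\Delta}$. The paper argues by induction on $n$, invoking the inductive hypothesis on the corners $(1-p_k)M_n(1-p_k)\cong M_{n-1}$; you propose to work only with $2\times 2$ and $3\times 3$ corners via Proposition~\ref{p additivity on M3}. This alternative is viable in principle, and indeed for entries $\phi_{ab}\widetilde{\Delta}(x)$ with $b\neq n$ a direct comparison with an element of a $3\times 3$ corner works, because $\phi_{ab}[z,e_{ln}]=0$ whenever $l\neq a$ and $b\neq n$.

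However, your phrase ``iterating with Proposition~\ref{p additivity on M3}\ldots handles sums involving several projections and partial isometries'' hides a genuine obstacle that you do not address: the $p_n$-column. For $x=\lambda_a p_a+\sum_{k<n}\mu_k e_{kn}$ one has $\phi_{an}[z,e_{ln}]=z_{al}$ for every $l\neq a$, so no comparison of $x$ with an element of a $3\times 3$ corner annihilates $\phi_{an}[z,x-y]$ for all $z$. Single matrix-unit functionals $\phi_{rs}$ are therefore insufficient here, contrary to what your sketch suggests. The paper resolves this (in its Step~II) by the composite functional $\phi_0=\sum_{j}\phi_{aj}$, which satisfies $\phi_0[z,e_{ln}]=\phi_0[z,p_l]$ for $l\neq a$; this lets one swap the off-index partial isometries for projections and reduce to the ``diagonal plus one isometry'' case. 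Your outline gives no indication of this mechanism, and without it the $3\times 3$-corner scheme stalls precisely at the $p_n$-column entries. Once you incorporate this trick, the rest of your plan (including the final absorption of $\lambda_n$ via Lemma~\ref{l Delta(1-x)+Delta(x)}, which matches the paper exactly) goes through.
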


\begin{proof} We shall argue by induction on $n$. The statement for $n=1$ is clear, while the case $n=2$ follows from Theorem \ref{t w-2-local derivations on M2}. We can therefore assume that $n\geq 3$.  Let us suppose that the desired conclusion is true for every $k<n$. \smallskip

As in the previous results, to simplify the notation, we write $M= M_n$. For each $i\neq j$ in $\{1,\ldots, n\}$, we shall denote by $e_{ij}$ the unique minimal partial isometry in $M$ satisfying $e_{ij}^* e_{ij} = p_j$ and $e_{ij} e_{ij}^* = p_i$. Henceforth, the symbol $\phi_{ij}$ will denote the unique norm-one functional in $M^*$ satisfying $\phi_{ij} (e_{ij})=1$. We also note that every element $a\in M$ satisfying $a = q a p_n $ writes in the form $\displaystyle a= \sum_{k=1}^{n-1} \mu_k e_{kn}$, for unique $\mu_1,\ldots, \mu_{n-1}$ in $\mathbb{C}$.\smallskip

Fix $j\in \{1,\ldots,n\}$. We observe that, for each matrix $z= (z_{ij}) \in M_n$, we have \begin{equation}\label{eq [zpj]} [z,p_j] = \sum_{k=1, k\neq j}^{n} z_{kj} e_{kj} - z_{jk} e_{jk}.
\end{equation} We deduce from the weak-2-local property of $\Delta$ that \begin{equation}\label{eq Delta pj} \Delta (p_j) = \Delta (p_j)^*  = \sum_{k=1, k\neq j}^{n} \overline{\lambda^{(j)}_{k}} e_{kj} + {\lambda^{(j)}_{k}} e_{jk},
\end{equation} for suitable  $\lambda^{(j)}_{k}\in \mathbb{C}$, $k\in \{1,\ldots,n\}\backslash\{j\}$. Given $i\neq j$, Lemma \ref{p Delta(p) p} and Proposition \ref{p finite additivity and linearity on projections} imply that $$0=(p_i+p_j) \Delta (p_i+p_j) (p_i+p_j) = (p_i+p_j)(\Delta (p_i) + \Delta (p_j)) (p_i+p_j),$$ which proves that $$\lambda_{i}^{(j)} = -\overline{\lambda_j^{(i)}},\ \ \ \ \forall i\neq j.$$ These identities show that the matrix $$z_0=-z_0^*:= \sum_{i>j} - {\lambda^{(j)}_{i}} e_{ji} + \sum_{i<j}  \overline{\lambda^{(i)}_{j}} e_{ji},$$ 
is well defined, and $\Delta (p_i) = [z_0 ,p_i]$ for every $i\in \{1,\ldots,n\}$. The mapping $\widehat{\Delta} = \Delta -[z_0,.]$ is a weak-2-local $^*$-derivation satisfying $$\widehat{\Delta} \left(\sum_{j=1}^{n} \lambda_j p_j \right) =0,$$ for every $\lambda_j\in \mathbb{C}$ (cf. Proposition \ref{p finite additivity and linearity on projections}).\smallskip

Let us fix $i_0\in \{1,\ldots,n-1\}$. It is not hard to check that the identity \begin{equation}\label{eq [z,ei0n]}  \left[z, e_{i_0n} \right]  =(z_{i_0 i_0}- z_{nn}) e_{i_0 n}+\sum_{j=1, j\neq i_0}^{n} z_{ji_0} e_{jn} -\sum_{j=1}^{n-1} z_{nj} e_{i_0j} ,
\end{equation} holds for every $z\in M.$ Combining this identity with \eqref{eq [zpj]} for $[z,p_n],$ and $[z,p_{i_0}],$ and the fact that $\widehat{\Delta}$ is a weak-2-local $^*$-derivation, we deduce, after an appropriate choosing of functionals $\phi\in M^*,$  that there exists $\gamma_{i_0 n}\in i \mathbb{R}$ satisfying $$\widehat{\Delta} (e_{i_0n}) = \gamma_{i_0 n} e_{i_0 n}, \ \ \forall i_0\in\{1,\ldots, n-1\}.$$

If we set $\displaystyle z_1:=\sum_{k=1}^{n-1} \gamma_{kn} p_k,$ then $z_1= - z_1^*,$ $$\widehat{\Delta} (e_{i_{0}n})= [z_1,e_{i_0n}],$$ for every $i_0\in\{1,\ldots, n-1\},$ and further $\displaystyle \left[z_1,\sum_{j=1}^{n} \lambda_j p_j\right]=0,$ for every $\lambda_j\in \mathbb{C}.$  Therefore, $\widetilde{\Delta} = \widehat{\Delta}-[z_1,.]$ is a weak-2-local $^*$-derivation satisfying \begin{equation}\label{eq deltatilde vanishes} \widetilde{\Delta} \left(\sum_{j=1}^{n} \lambda_j p_j\right)= \widetilde{\Delta} (e_{i_0n})=0,
 \end{equation} for every $i_0\in\{1,\ldots, n-1\}.$\smallskip

The rest of the proof is devoted to establish that $$\widetilde{\Delta} \left(\sum_{j=1}^n \lambda_j p_j + \sum_{k=1}^{n-1} \mu_k e_{kn} \right)=0,$$ for every $\mu_1,\ldots, \mu_{n-1},$ $\lambda_1,\ldots, \lambda_n$ in $\mathbb{C}$, which finishes the proof. The case $n=3$ follows from Proposition \ref{p additivity on M3}. So, henceforth, we assume $n\geq 4$. We shall split the arguments in several steps.\smallskip

\emph{Step I.} We shall first show that, for each $1\leq i_0\leq n-1,$ \begin{equation}\label{eq 1 2502} p_{i_0}\widetilde{\Delta}\left(\sum_{i=1}^n \lambda_i p_i + \mu e_{i_0n}\right)=0,
 \end{equation} for every $\lambda_1,\ldots, \lambda_n,\mu$ in $\mathbb{C}$.\smallskip

Let us pick $k\in\{1,\ldots,n-1\}$ with $k\ne i_0$. By the induction hypothesis
\begin{equation}\label{eq 2 2502} (1-p_k)\widetilde{\Delta}\left(\sum_{ i=1, i\ne k}^n \lambda_i p_i + \mu e_{i_0n}\right)(1-p_k)
 \end{equation} $$= \sum_{ i=1, i\ne k}^n  \lambda_i (1-p_k) \widetilde{\Delta} (p_i) (1-p_k) + \mu (1-p_k) \widetilde{\Delta} (e_{i_0n} ) (1-p_k)  =0.$$

Since for any $z\in M,$ the identity $$ (1-p_k)\left[z, \sum_{i=1}^n \lambda_i p_i + \mu e_{i_0n}\right](1-p_k)=(1-p_k)\left[z, \sum_{ i=1, i\ne k}^n \lambda_i p_i + \mu e_{i_0n}\right](1-p_k),$$ holds, if we take $\phi=\phi_{i_0j}$ with $j\ne k,$ we get, applying \eqref{eq 2 2502} and the weak-2-local property of $\widetilde{\Delta}$, that
$$p_{i_0}\widetilde{\Delta}\left(\sum_{i=1}^n \lambda_i p_i + \mu e_{i_0n}\right)p_j=0.\quad (1\leq j\leq n,\ j\ne k)$$
Since $4\leq n,$ we can take at least two different values for $k$ to obtain \eqref{eq 1 2502}.\smallskip

\emph{Step II.} In this step we prove that, for each $1\leq i_0\leq n-1,$ \begin{equation}\label{eq 3 2502} p_{i_0}\widetilde{\Delta}\left(\lambda p_{i_0}+\sum_{i=1}^{n-1} \mu_i e_{in} \right)p_n=0,
 \end{equation} for every $\lambda$ and $\mu_1,\ldots,\mu_{n-1}$ in $\mathbb{C}$.\smallskip

We fix $1\leq i_0\leq n-1$, and we pick $k\in\{1,\ldots,n-1\}$ with $k\ne i_0$. By the induction hypothesis, we have \begin{equation}\label{eq 4 2502} (1-p_k)\widetilde{\Delta}\left(\lambda p_{i_0}+\sum_{ i=1, i\ne k}^{n-1} \mu_i e_{in} \right)(1-p_k)
 \end{equation}
$$= \lambda (1-p_k)\widetilde{\Delta}\left( p_{i_0}\right) (1-p_k)+\sum_{ i=1, i\ne k}^{n-1 } \mu_i (1-p_k)\widetilde{\Delta}\left( e_{in} \right)(1-p_k) =0,$$ for every $\lambda$ and $\mu_1,\ldots,\mu_{n-1}$ in $\mathbb{C}$.\smallskip

Since for any $z\in M,$ the equality
$$(1-p_k)\left[z, \lambda p_{i_0}+\sum_{i=1}^{n-1} \mu_i e_{in}\right](1-p_n)=(1-p_k)\left[z, \lambda p_{i_0}+\sum_{ i=1, i\ne k}^{n-1} \mu_i e_{in}\right](1-p_n),$$ holds, we deduce from  \eqref{eq 4 2502} and the weak-2-local property of $\widetilde{\Delta}$, applied to $\phi= \phi_{i_0j}$ with $j\ne k,n$, that
$$p_{i_0}\widetilde{\Delta}\left(\lambda p_{i_0}+\sum_{i=1}^{n-1} \mu_i e_{in} \right)p_j=0,\quad (\forall 1\leq j\leq n-1,\ j\ne k).$$
By taking two different values for $k$, we see that
\begin{equation}\label{pi0 Delta(lambda+sum)(1-pn)=0}
  p_{i_0}\widetilde{\Delta}\left(\lambda p_{i_0}+\sum_{i=1}^{n-1} \mu_i e_{in} \right)(1-p_n)=0.
\end{equation}

Let $\phi_0=\sum_{j=1}^n \phi_{i_0j}$. It is not hard to see that the equality
$$\phi_0\left[z, \sum_{i=1, i\ne i_0}^{n-1} \mu_i e_{in} \right] = \phi_0\left[z, \sum_{i=1, i\ne i_0}^{n-1} \mu_i p_i \right],$$ holds for every $z\in M.$ Thus,
$$\phi_0 \left[z, \lambda p_{i_0}+\sum_{i=1}^{n-1} \mu_i e_{in} \right] = \phi_0 \left[z, \lambda p_{i_0}+\sum_{i=1, i\ne i_0}^{n-1} \mu_i p_i + \mu_{i_0} e_{i_0n}\right],$$ for every $z\in M.$ Therefore, the weak-2-local property of $\widetilde{\Delta}$ implies that
$$\phi_0\widetilde{\Delta}\left(\lambda p_{i_0}+\sum_{i=1}^{n-1} \mu_i e_{in} \right) = \phi_0\widetilde{\Delta}\left(\lambda p_{i_0}+\sum_{i=1, i\ne i_0}^{n-1} \mu_i p_i + \mu_{i_0} e_{i_0n}\right)=0,$$
where the last equality follows from \eqref{eq 1 2502}. Combining this fact with \eqref{pi0 Delta(lambda+sum)(1-pn)=0}, we get \eqref{eq 3 2502}.\smallskip

\emph{Step III.} In this final step we shall show that \begin{equation}\label{eq 5 2502} \widetilde{\Delta}\left(\sum_{i=1}^{n-1}\lambda_i p_i + \sum_{i=1}^{n-1} \mu_i e_{in} \right)=0,
 \end{equation} for every $\mu_1,\ldots, \mu_{n-1},$ $\lambda_1,\ldots, \lambda_{n-1}$ in $\mathbb{C}$.\smallskip

Let $k\in\{1,\ldots,n-1\}.$ By the induction hypothesis
$$(1-p_k)\widetilde{\Delta}\left(\sum_{i=1, i\ne k}^{n-1}\lambda_i p_i + \sum_{i=1, i\ne k}^{n-1} \mu_i e_{in} \right)(1-p_k)=0.$$
Since for any $z\in M,$ we have
$$(1-p_k)\left[z, \sum_{i=1}^{n-1}\lambda_i p_i + \sum_{i=1}^{n-1} \mu_i e_{in}\right](1-p_k-p_n)$$ $$=(1-p_k)\left[z, \sum_{i=1, i\ne k}^{n-1}\lambda_i p_i + \sum_{i=1, i\ne k}^{n-1} \mu_i e_{in}\right](1-p_k-p_n),$$
by taking $\phi=\phi_{lj},$ with $l\ne k$ and $j\ne k,n,$ we deduce, via the weak-2-local behavior of $\widetilde{\Delta}$, that
$$p_l\widetilde{\Delta}\left(\sum_{i=1}^{n-1}\lambda_i p_i + \sum_{i=1}^{n-1} \mu_i e_{in} \right)p_j=0,$$ for every $l\ne k$ and $j\ne k,n.$
Taking three different values for $k,$ we show that
\begin{equation}\label{Delta(sum+sum)(1-pn)=0}
  \widetilde{\Delta}\left(\sum_{i=1}^{n-1}\lambda_i p_i + \sum_{i=1}^{n-1} \mu_i e_{in} \right)(1-p_n)=0.
\end{equation}

Let us pick $i_0\in\{1,\ldots,n-1\}$. It is easy to check that the identity
$$p_{i_0}\left[z, \sum_{i=1}^{n-1}\lambda_i p_i + \sum_{i=1}^{n-1} \mu_i e_{in} \right]p_n = p_{i_0}\left[z, \lambda_{i_0} p_{i_0} + \sum_{i=1}^{n-1} \mu_i e_{in} \right]p_n,$$ holds for every $z\in M.$
So, taking $\phi=\phi_{i_0n}$, we deduce from the weak-2-local property of $\widetilde{\Delta}$ that
$$p_{i_0}\widetilde{\Delta}\left(\sum_{i=1}^{n-1} \lambda_i p_i + \sum_{i=1}^{n-1} \mu_i e_{in}\right)p_n=p_{i_0}\widetilde{\Delta}\left(\lambda_{i_0} p_{i_0} + \sum_{i=1}^{n-1} \mu_i e_{in}\right)p_n=0,$$
where the last equality is obtained from \eqref{eq 3 2502}. Since above identity holds for any $i_0\in\{1,\ldots,n-1\},$ we conclude that
\begin{equation}\label{(1-pn)Delta(sum+sum)pn=0}
  (1-p_n)\widetilde{\Delta}\left(\sum_{i=1}^{n-1} \lambda_i p_i + \sum_{i=1}^{n-1} \mu_i e_{in}\right)p_n=0.
\end{equation}
Now, Lemma \ref{l trace zero} implies that  $\displaystyle \textrm{tr}\,\widetilde{\Delta}\left(\sum_{i=1}^{n-1} \lambda_i p_i + \sum_{i=1}^{n-1} \mu_i e_{in}\right)=0$, which combined with \eqref{Delta(sum+sum)(1-pn)=0}, shows that
\begin{equation}\label{pn Delta(sum+sum)pn=0}
  p_n\widetilde{\Delta}\left(\sum_{i=1}^{n-1} \lambda_i p_i + \sum_{i=1}^{n-1} \mu_i e_{in}\right)p_n=0.
\end{equation}
Identities \eqref{Delta(sum+sum)(1-pn)=0}, \eqref{(1-pn)Delta(sum+sum)pn=0} and \eqref{pn Delta(sum+sum)pn=0} prove the statement in \eqref{eq 5 2502}.\smallskip

Finally, for $\lambda_n\neq 0,$ we have $$\widetilde{\Delta} \left(\sum_{j=1}^n \lambda_j p_j + \sum_{k=1}^{n-1} \mu_k e_{kn} \right)=  \widetilde{\Delta} \left(\lambda_n 1 +\sum_{j=1}^{n-1} (\lambda_j-\lambda_n) p_j + \sum_{k=1}^{n-1} \mu_k e_{kn} \right) $$ $$=\lambda_n \widetilde{\Delta} \left( 1 +\lambda_n^{-1} \sum_{j=1}^{n-1} (\lambda_j-\lambda_n) p_j + \lambda_n^{-1}\sum_{k=1}^{n-1} \mu_k e_{kn} \right)= \hbox{(by Lemma \ref{l Delta(1-x)+Delta(x)})}$$ $$= \lambda_n \widetilde{\Delta} \left( \lambda_n^{-1} \sum_{j=1}^{n-1} (\lambda_j-\lambda_n) p_j + \lambda_n^{-1}\sum_{k=1}^{n-1} \mu_k e_{kn} \right)=\hbox{(by \eqref{eq 5 2502})} =0,$$ for every $\mu_1,\ldots, \mu_{n-1},$ $\lambda_1,\ldots, \lambda_{n-1}$ in $\mathbb{C}$
\end{proof}

Our next result is a consequence of the above Proposition \ref{p boundedness of Delta 1} and Lemma \ref{l linearity on Msa}.

\begin{corollary}\label{c boundedness of Delta 1b} Let $\Delta: M_n\to M_n$ be a weak-2-local $^*$-derivation, where $n\in \mathbb{N}$, $2\leq n$. Suppose $p_1,\ldots, p_n$ are mutually orthogonal minimal projections in $M_n$, $q= p_1+\ldots+p_{n-1}$, and $a\in M_n$ satisfies $a^*=a$ and  $a = q a p_n + p_n a q$. Then $$\Delta \left(\sum_{j=1}^{n} \lambda_j p_j + a\right) = \Delta \left(\sum_{j=1}^{n} \lambda_j p_j \right)+ \Delta (a) = \sum_{j=1}^{n} \lambda_j \Delta \left(  p_j \right) +  \Delta(a),$$ for every $\lambda_1,\ldots, \lambda_{n}\in \mathbb{R}$, and the restriction of $\Delta $ to $(M_n)_{sa}\cap(q M_n p_{n} + p_n M_n q)$ is linear.
\end{corollary}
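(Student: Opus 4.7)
The plan is to leverage Proposition \ref{p boundedness of Delta 1} to produce a skew-adjoint $w_0\in M_n$ with $\Delta(x) = [w_0,x]$ on the linear span of $\{p_1,\ldots,p_n\}$ together with $qM_np_n$, and then bootstrap this commutator representation to all self-adjoint elements of $qM_np_n + p_nM_nq$ using Lemma \ref{l linearity on Msa}.

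First I would write $a_1 := qap_n$, so that $a = a_1 + a_1^*$ (since $a^* = a$ and $a = qap_n + p_naq$), and consider the Cartesian decomposition $a_1 = s + it$ with $s := a/2$ and $t := (a_1 - a_1^*)/(2i)$, both self-adjoint. By Proposition \ref{p boundedness of Delta 1}, there exists $w_0\in M_n$ with $w_0^* = -w_0$ satisfying $\Delta(P + a_1) = [w_0, P + a_1]$ for $P := \sum_{j=1}^n \lambda_j p_j$ (with $\lambda_j\in\mathbb{R}$ in our setting).

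Next I would exploit $\Delta = \Delta^\sharp$, valid for every weak-2-local $^*$-derivation: taking adjoints in the identity above and using $w_0^* = -w_0$ one obtains $\Delta(P + a_1^*) = [w_0, P + a_1^*]$. Applying Lemma \ref{l linearity on Msa} to the self-adjoint decompositions $P + a_1 = (P + a/2) + it$ and $P + a_1^* = (P + a/2) - it$ yields
$$\Delta(P + a_1) = \Delta(P + a/2) + i\Delta(t),\qquad \Delta(P + a_1^*) = \Delta(P + a/2) - i\Delta(t).$$
Adding these identities and comparing with the commutator expressions gives $2\Delta(P + a/2) = [w_0, 2P + a]$, hence $\Delta(P + a/2) = [w_0, P + a/2]$. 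Since $2a$ still satisfies the hypotheses on $a$, replacing $a$ by $2a$ upgrades this to $\Delta(P + a) = [w_0, P + a]$. Expanding the commutator and using $\Delta(p_j) = [w_0,p_j]$ from Proposition \ref{p boundedness of Delta 1} and $\Delta(a) = [w_0,a]$ (the case $P = 0$) yields the additivity formula of the corollary.

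The linearity claim is then immediate: the restriction of $\Delta$ to $(M_n)_{sa}\cap(qM_np_n + p_nM_nq)$ coincides with $b\mapsto [w_0,b]$, which is $\mathbb{R}$-linear. The only conceptual step is recognizing that $\sharp$-invariance of $\Delta$ together with the skew-adjointness of $w_0$ automatically extend the inner implementation from $qM_np_n$ to the adjoint piece $p_nM_nq$; after that, the argument is routine linear bookkeeping with Lemma \ref{l linearity on Msa}.
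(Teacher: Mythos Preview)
Your argument is correct and follows essentially the same route as the paper: both proofs split the self-adjoint element $P+a$ via Lemma~\ref{l linearity on Msa} into pieces to which Proposition~\ref{p boundedness of Delta 1} applies, and then recombine. The paper writes $P+a = x + x^*$ with $x=\tfrac12 P + qap_n$ and uses the additivity conclusion of Proposition~\ref{p boundedness of Delta 1} directly to obtain $\Delta(P+a)=\Delta(x)+\Delta(x)^*=\sum_j\lambda_j\Delta(p_j)+\Delta(qap_n)+\Delta(qap_n)^*=\sum_j\lambda_j\Delta(p_j)+\Delta(a)$, avoiding the intermediate ``replace $a$ by $2a$'' step; your version instead carries the commutator $[w_0,\cdot]$ through explicitly, which has the minor bonus of making the final linearity claim on $(M_n)_{sa}\cap(qM_np_n+p_nM_nq)$ completely transparent. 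Both are valid and rest on the same two ingredients.
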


\begin{proof} Under the above hypothesis, Lemma \ref{l linearity on Msa} implies that

$$\Delta \left(\sum_{j=1}^{n} \lambda_j p_j + a\right) =  \Delta \left(\frac12 \sum_{j=1}^{n} \lambda_j p_j + q a p_n \right) + \Delta \left(\frac12  \sum_{j=1}^{n} \lambda_j p_j + q a p_n \right)^* $$
$$= \hbox{(by Proposition \ref{p boundedness of Delta 1})} =  \sum_{j=1}^{n} \lambda_j \Delta \left(  p_j \right) + \Delta \left( q a p_n \right) + \Delta \left( q a p_n \right)^*  $$
$$= \sum_{j=1}^{n} \lambda_j \Delta \left(  p_j \right) +  \Delta(q a p_n + p_n a q )= \sum_{j=1}^{n} \lambda_j \Delta \left(  p_j \right) +  \Delta(a).$$
\end{proof}

We can prove now that the measure $\mu$ on $\mathcal{P}(M_n)$ determined by a weak-2-local $^*$-derivation on $M_n$ is always bounded.

\begin{proposition}\label{p boundedness of Delta} Let $\Delta: M_n\to M_n$ be a weak-2-local $^*$-derivation, where $n\in \mathbb{N}$. Then $\Delta $ is bounded on the set $\mathcal{P}(M_n)$ of all projections in $M_{n}$.
\end{proposition}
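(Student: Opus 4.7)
The plan is to combine additivity on orthogonal projections with an induction on $n$ and the restriction technique of Proposition \ref{p restriction}, using the inductive hypothesis on $M_{n-1}$-shaped corners and Theorem \ref{t w-2-local derivations on M2} on $M_2$-shaped corners. By Proposition \ref{p finite additivity and linearity on projections}, every $p\in\mathcal{P}(M_n)$ of rank $r$ decomposes as $p=e_1+\cdots+e_r$ for mutually orthogonal minimal projections $e_j$, and $\Delta(p)=\sum_{j=1}^r\Delta(e_j)$, whence $\|\Delta(p)\|\le n\sup\{\|\Delta(e)\|:e\text{ is a minimal projection}\}$. It therefore suffices to bound $\Delta$ on minimal projections. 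I induct on $n$; the cases $n=1$ (where $\Delta\equiv 0$ by Lemma \ref{l Delta(1)=0}) and $n=2$ (where Theorem \ref{t w-2-local derivations on M2} makes $\Delta$ a linear map on a finite-dimensional space) are immediate.

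For the inductive step, fix a minimal projection $e=\xi\otimes\xi\in M_n$. Since $\Delta$ is a weak-2-local $^*$-derivation, $\Delta(e)$ is self-adjoint; combined with $e\Delta(e)e=0=(1-e)\Delta(e)(1-e)$ from Lemma \ref{p Delta(p) p}, this forces $\Delta(e)=\eta\otimes\xi+\xi\otimes\eta$ for a unique $\eta\in\xi^\perp$, with $\|\Delta(e)\|=\|\eta\|$. Choose any unit vector $\xi_n\in\xi^\perp$, set $p_n:=\xi_n\otimes\xi_n$, $q:=1-p_n$ and $r:=e+p_n$. A direct block computation using $\xi\in qH$, $\xi\perp p_nH$ and $p_n\le 1-e$ collapses the decomposition of $\Delta(e)$ with respect to $(e,q-e,p_n)$ to
\[
\Delta(e)=q\Delta(e)q+r\Delta(e)r,
\]
the remaining blocks vanishing.

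For the first summand, Proposition \ref{p restriction} makes $q\Delta q|_{qM_nq}$ a weak-2-local $^*$-derivation on the corner $qM_nq\cong M_{n-1}$; the inductive hypothesis supplies a constant $K_{n-1}$ depending only on $n-1$ with $\|q\Delta(e)q\|\le K_{n-1}$. For the second summand, Proposition \ref{p restriction} again gives that $r\Delta r|_{rM_nr}$ is a weak-2-local $^*$-derivation on $rM_nr\cong M_2$, and Theorem \ref{t w-2-local derivations on M2} forces it to be linear. Since every $^*$-derivation on $M_2$ is inner, $r\Delta(x)r=[w_r,x]$ for some skew-adjoint $w_r\in rM_nr$, so $\|r\Delta(e)r\|\le 2\|w_r\|$, and the problem reduces to bounding $\|w_r\|$ by a constant depending only on $n$.

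The last uniform bound is the main obstacle, since $w_r$ a priori depends on both $e$ and $p_n$. The strategy is to realize $w_r$ as (a normalization of) the restriction of a single global implementer. Complete $\xi$ and $\xi_n$ to an orthonormal basis $\xi_1=\xi,\xi_2,\ldots,\xi_n$, write $p_j:=\xi_j\otimes\xi_j$ and $e_{jn}:=\xi_j\otimes\xi_n$, and invoke Proposition \ref{p boundedness of Delta 1} to obtain a skew-adjoint $W\in M_n$ satisfying $\Delta(p_j)=[W,p_j]$ for every $j$ and $\Delta(e_{jn})=[W,e_{jn}]$ for $j<n$. Taking adjoints and using $\Delta^\sharp=\Delta$ and $W^*=-W$ yields $\Delta(e_{nj})=[W,e_{nj}]$ as well, so the linear map $[W,\cdot\,]$ agrees with $\Delta$ on all of $rM_nr$; by uniqueness of the implementer on $M_2$ modulo the centre, $w_r$ equals $rWr$ up to a scalar multiple of $r$, giving $\|w_r\|\le 2\|W\|$. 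It remains to bound $\|W\|$ by a constant $C_n$ depending only on $n$. From the explicit construction $W=z_0+z_1$ in the proof of Proposition \ref{p boundedness of Delta 1}, $\|W\|$ is controlled by the matrix coefficients of $\Delta(p_j)$ and of $\widehat{\Delta}(e_{jn})$ in the fixed basis; each $p_j$ with $j<n$ is dominated by the rank-$(n-1)$ projection $1-p_n$, so the induction bounds $\|(1-p_n)\Delta(p_j)(1-p_n)\|$, while the identities $e_{jn}+e_{nj}=f_{j,n}^+-f_{j,n}^-$ and $i(e_{jn}-e_{nj})=g_{j,n}^+-g_{j,n}^-$, with $f_{j,n}^\pm,g_{j,n}^\pm$ the rank-$1$ projections onto $\operatorname{span}(\xi_j\pm\xi_n)$ and $\operatorname{span}(\xi_j\pm i\xi_n)$, re-express the partial-isometry data as differences of rank-$1$ projections through Lemma \ref{l linearity on Msa} and Corollary \ref{c to propo additivity on orthogonal projections}. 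Collecting these estimates produces a self-referential inequality of the form $\sup\|\Delta(e)\|\le K_{n-1}+c\cdot\sup\|\Delta(e)\|+c'$ with $c<1$ (after normalising the scalar ambiguity in $w_r$), which solves for a uniform bound $K_n$ and closes the induction.
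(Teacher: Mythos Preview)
Your approach has a genuine gap that prevents it from closing the induction.

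The core problem is that all of your auxiliary data --- the corner $q=1-p_n$, the rank-two projection $r=e+p_n$, the basis $\xi_1,\ldots,\xi_n$, and the implementer $W$ from Proposition~\ref{p boundedness of Delta 1} --- are chosen \emph{after} $e$ and hence vary with $e$. Consequently the inductive hypothesis, applied to the weak-2-local $^*$-derivation $q\Delta q|_{qM_nq}$ on $qM_nq\cong M_{n-1}$, only yields a bound depending on that particular restricted map, i.e.\ on $q$, i.e.\ on $e$. There is no ``constant $K_{n-1}$ depending only on $n-1$'': the statement being proved is merely that $\Delta$ is bounded on $\mathcal{P}(M_n)$, with the bound depending on $\Delta$, so the induction produces a bound depending on $q\Delta q$, and this map changes as $e$ changes. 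The same non-uniformity infects the bound on $\|W\|$, since the matrix units $p_j,e_{jn}$ are built from a basis containing $\xi$.

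Your proposed fix --- a self-referential inequality $\sup_e\|\Delta(e)\|\le K_{n-1}+c\cdot\sup_e\|\Delta(e)\|+c'$ with $c<1$ --- does not rescue the argument. First, the claim $c<1$ is asserted without any computation; the phrase ``after normalising the scalar ambiguity in $w_r$'' does not explain why the coefficients coming from the rank-one projections $f^\pm_{j,n},g^\pm_{j,n}$ sum to something strictly less than $1$. Second, and more seriously, even if such an inequality held, it would not imply finiteness of the supremum: an inequality $S\le A+cS$ with $0<c<1$ is satisfied by $S=+\infty$, so you cannot conclude $S\le A/(1-c)$ without first knowing $S<\infty$ --- which is precisely the assertion to be proved.

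The paper avoids this circularity by \emph{fixing} once and for all a system of minimal projections $p_1,\ldots,p_n$ and the three corners $q_i=1-p_i$ for $i\in\{1,2,n\}$. For each fixed $q_i$, the induction together with Mackey--Gleason upgrades $q_i\Delta q_i|_{q_iMq_i}$ to a genuine $^*$-derivation $[z_0^{(i)},\cdot\,]$, and a second fixed element $z_1^{(i)}$ handles the off-diagonal part $q_iMp_i+p_iMq_i$. The crucial input is Corollary~\ref{c boundedness of Delta 1b}, which lets one decompose $q_i\Delta(b)q_i$ for an \emph{arbitrary} self-adjoint $b$ (not just a projection) into $q_i[z_0^{(i)},q_ibq_i]q_i+q_i[z_1^{(i)},q_ibp_i+p_ibq_i]q_i$, giving the uniform bound $\|q_i\Delta(b)q_i\|\le 2\|z_0^{(i)}\|+2\|z_1^{(i)}\|$ with constants that do not depend on $b$. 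Since for $n\ge 3$ every matrix entry survives in at least one of the three compressions $q_1(\cdot)q_1$, $q_2(\cdot)q_2$, $q_n(\cdot)q_n$, this yields the desired bound on $\mathcal{P}(M_n)$. The key difference with your attempt is that uniformity is obtained by keeping the corner fixed and letting the argument $b$ vary, rather than the other way around.
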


\begin{proof} We shall proceed by induction on $n$. The statement for $n=1$ is clear, while the case $n=2$ is a direct consequence of Theorem \ref{t w-2-local derivations on M2}. We may, therefore, assume that $n\geq 3.$ Suppose that the desired conclusion is true for every $k<n.$ To simplify notation, we write $M= M_n$. We observe that, by hypothesis, $\Delta^{\sharp} = \Delta.$\smallskip

Let $p_1,\ldots, p_{n}$ be (arbitrary) mutually orthogonal minimal projections in $M.$ For each $i,j\in \{1,\ldots, n\}$, we shall denote by $e_{ij}$ the unique minimal partial isometry in $M$ satisfying $e_{ij}^* e_{ij} = p_j$ and $e_{ij} e_{ij}^* = p_i$. Henceforth, the symbol $\phi_{ij}$ will denote the unique norm-one functional in $M^*$ satisfying $\phi_{ij} (e_{ij})=1$.\smallskip

Let $q_n= p_1+\ldots+p_{n-1}.$ Proposition \ref{p restriction} implies that the mapping $$q_n\Delta q_n|_{q_nMq_n}: q_nMq_n \to q_nMq_n$$ is a weak-2-local $^*$-derivation on $q_nM q_n\equiv M_{n-1} (\mathbb{C})$. We know, by the induction hypothesis, that $q_n\Delta q_n|_{q_nMq_n}$ is bounded on the set $\mathcal{P}(q_nMq_n)$ of all projections in $q_nMq_n$.  Proposition \ref{p finite additivity and linearity on projections}, assures that $\mu : \mathcal{P} (q_nMq_n) \to q_n M q_n$, $p \mapsto q_n\Delta (p)q_n$ is a bounded, finitely additive measure. An application of the Mackey-Gleason theorem (cf. \cite{BuWri92}) proves the existence of a (bounded) linear operator $G: q_nMq_n \to q_nMq_n$ satisfying $G(p) = \mu (p) =q_n\Delta(p)q_n$, for every projection $p$ in $q_nMq_n$. Another application of Proposition \ref{p finite additivity and linearity on projections}, combined with a simple spectral resolution, shows that $q_n\Delta (a)q_n = G(a)$, for every self-adjoint element in $q_nMq_n$. Therefore, $q_n\Delta (a+b) q_n = G(a+b) = G(a) + G(b)= q_n\Delta (a) q_n + q_n\Delta (b) q_n,$ for every $a,b$ in the self-adjoint part of $q_nM q_n$.\smallskip

Now, Lemma \ref{l linearity on Msa} implies that $q_n\Delta q_n|_{q_nMq_n}$ is a $^*$-derivation on $q_nMq_n$. Therefore there exists $z_0= -z_0^*\in q_nMq_n$ such that \begin{equation}\label{eq z0 in qMq} q_n\Delta (q_naq_n) q_n  = [z_0, q_naq_n],
\end{equation} for every $a\in M$.\smallskip

Now, it is not hard to see that the identities:\begin{equation}\label{eq [z,e1n]} q_n \left[z, e_{1n} \right] q_n = -z_{n1} p_1 - \sum_{j=2}^{n-1} z_{nj} e_{1j} =- \sum_{j=1}^{n-1} z_{nj} e_{1j},
\end{equation} and
\begin{equation}\label{eq [z,ekn]} q_n \left[z, e_{kn} \right] q_n = - \sum_{j=1}^{n-1} z_{nj} e_{kj}, \ q_n \left[z, e_{nk} \right] q_n =  \sum_{j=1}^{n-1} z_{jn} e_{jk},
\end{equation}
hold for every $z\in M$,   and $1\leq k\leq n-1$ (cf. \eqref{eq [z,ei0n]}). The weak-2-local property of $\Delta$, combined with \eqref{eq [z,e1n]} and \eqref{eq [z,ekn]}, implies that $$\phi_{kl} \left( \Delta(e_{kn}) \right) = \phi_{1l} \left(\Delta(e_{1n}) \right),$$ for every $1\leq k\leq n-1$ and every $1\leq l\leq n-1$. Furthermore, for $2\leq i \leq n-1$, $1\leq j \leq n-1$ there exits $z\in M$, depending on $e_{1n}$ and $\phi_{ij}$, such that $\phi_{ij} \Delta(e_{1n})= \phi_{ij} [z,e_{1n}] = \phi_{ij} (q_n [z,e_{1n}] q_n)= \hbox{(by \eqref{eq [z,e1n]})} =0$. Therefore \begin{equation}\label{eq Delta e1n} q_n \Delta(e_{1n}) q_n = \sum_{j=1}^{n-1} \lambda_{nj} e_{1j},
\end{equation} for suitable (unique) $\lambda_{nj}$'s in $\mathbb{C}$ ($1\leq j\leq n-1$), and consequently, \begin{equation}\label{eq Delta en1} q_n \Delta(e_{n1}) q_n = q_n \Delta(e_{1n})^* q_n = \left(q_n \Delta(e_{1n}) q_n\right)^* = \sum_{j=1}^{n-1} \overline{\lambda_{nj}} e_{j1}.
\end{equation} We similarly obtain $$q_n \Delta(e_{kn}) q_n = \sum_{j=1}^{n-1} \lambda_{nj} e_{kj},$$ for every $1\leq k\leq n-1.$\smallskip

Let us define $$z_1=-z_1^*:= \sum_{j=1}^{n-1} \overline{\lambda_{nj}} e_{jn} - \lambda_{nj} e_{nj} \in p_{n}M q_n +q_n M p_{n}.$$  It is easy to check that $$q_n\Delta(e_{kn}) q_n= q_n [z_1, e_{kn}] q_n,\ q_n\Delta(e_{nk}) q_n= q_n [z_1, e_{nk}] q_n, \ \ \ \forall 1\leq k\leq n-1, $$ $$q_n[z_1,q_naq_n]q_n=0, \hbox{ and, } q_n[z_0, q_na p_{n} + p_{n} a q_n] q_n=0,$$ for every $a\in M$. Therefore  \begin{equation}\label{eq Deltahat vanishes 1} q_n {\Delta} (q_n a q_n)q_n = q_n [z_0+z_1, q_n a q_n ] q_n = q_n [z_0 , q_n a q_n ] q_n,
\end{equation}   $$q_n {\Delta} (e_{kn}) q_n= q_n [z_0+z_1, e_{kn}] q_n = q_n [z_1, e_{kn}] q_n,$$ and  $$q_n {\Delta} (e_{nk}) q_n= q_n [z_0+z_1, e_{nk}] q_n= q_n [z_1, e_{nk}] q_n,$$ for every $a\in M$, $1\leq k\leq n-1$.\smallskip

We claim that the set \begin{equation}\label{eq 1b 2502} \Big\{ q_n \Delta (b) q_n : b\in M, b^*=b, \|b\|\leq 1 \Big\}
\end{equation} is bounded. Indeed, let us take $b=b^*\in M$ with $\|b \|\leq 1$. The last statement in Lemma \ref{l almost orthogonality} shows that \begin{equation}\label{eq qDq vanishes on the orthognal} q_n  {\Delta}(b) q_n = q_n {\Delta} (q_nbq_n+ q_nbp_{n} +p_{n} b q_n + p_{n} b p_{n}) q_n
 \end{equation}$$= q_n {\Delta} (q_nbq_n+ q_nbp_{n} +p_{n} b q_n ) q_n.$$ The element $q_n b q_n$ is self-adjoint in $q_n M q_n$, so, there exist mutually orthogonal minimal projections $r_1,\ldots, r_{n-1}$ in $q_n M q_n$ and real numbers $\lambda_1,\ldots, \lambda_{n-1}$ such that $\displaystyle q_n b q_n = \sum_{j=1}^{n-1} \lambda_j r_j$ and $r_1+\ldots+r_{n-1} =q_n$. We also observe that $p_{n} b q_n + q_{n} b p_{n}$ is self-adjoint in $q_n M p_n + p_n M q_n$, thus, Corollary \ref{c boundedness of Delta 1b} implies that $$q_n  {\Delta}(b) q_n = q_n {\Delta} (q_nbq_n+ q_nbp_{n} +p_{n} b q_n ) q_n$$ $$= q_n {\Delta} (q_nbq_n) q_n + q_n {\Delta} ( q_nbp_{n} +p_{n} b q_n ) q_n $$ $$=\hbox{ (by \eqref{eq Deltahat vanishes 1}) }= q_n [z_0 , q_nbq_n] q_n + q_n [ z_1, q_nbp_{n} +p_{n} b q_n ] q_n ,$$ and hence $$\| q_n  {\Delta}(b) q_n \| \leq 2 \|z_0\|+ 2 \|z_1\|,$$ which proves the claim in \eqref{eq 1b 2502}.\smallskip

Following a similar reasoning to that given in the proof of \eqref{eq 1b 2502} we can obtain that the sets \begin{equation}\label{eq 2b 2502} \Big\{ q_1 \Delta (b) q_1 : b\in M, b^*=b, \|b\|\leq 1 \Big\}
\end{equation}  and \begin{equation}\label{eq 3b 2502} \Big\{ q_2 \Delta (b) q_2 : b\in M, b^*=b, \|b\|\leq 1 \Big\}
\end{equation} are bounded, where $q_2 = 1-p_2$ and $q_1= 1-p_1$.

The boundedness of $\Delta$ on the set $\mathcal{P} (M_n)$ of all projections in $M_n$ is a direct consequence of \eqref{eq 1b 2502}, \eqref{eq 2b 2502}, and \eqref{eq 3b 2502}.
\end{proof}

We can establish now the main result of this paper.

\begin{theorem}\label{t weak-2-local derivations on Mn are derivations} Every {\rm(}non-necessarily linear nor continuous{\rm)} weak-2-local $^*$-derivation on $M_n$ is linear and a derivation.
\end{theorem}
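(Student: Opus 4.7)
The case $n=1$ is trivial, and the case $n=2$ is already covered by Theorem \ref{t w-2-local derivations on M2}, since every weak-2-local $^*$-derivation is, in particular, a weak-2-local derivation. So the plan is to handle $n\geq 3$.

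My plan is to feed the ingredients accumulated in Propositions \ref{p finite additivity and linearity on projections} and \ref{p boundedness of Delta} into the Bunce-Wright-Mackey-Gleason theorem, use the spectral theorem to extend from projections to self-adjoint elements, and then invoke Lemma \ref{l linearity on Msa} to get linearity on all of $M_n$. First, Proposition \ref{p boundedness of Delta} asserts that $\Delta$ is bounded on $\mathcal{P}(M_n)$, and Proposition \ref{p finite additivity and linearity on projections} says that the map $\mu:\mathcal{P}(M_n)\to M_n,\ p\mapsto \Delta(p),$ is finitely additive. Since $n\geq 3$, the algebra $M_n$ has no direct summand of type $I_2$, so the Bunce-Wright-Mackey-Gleason theorem \cite{BuWri92} yields a (unique) bounded linear operator $G:M_n\to M_n$ with $G(p)=\Delta(p)$ for every projection $p\in M_n$.

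Next, I would show $\Delta=G$ on the whole of $M_n$. For a self-adjoint $a\in M_n$, the spectral theorem provides a decomposition $a=\sum_{j=1}^n \lambda_j p_j$ with $\lambda_j\in\mathbb{R}$ and mutually orthogonal projections $p_1,\ldots,p_n$. Proposition \ref{p finite additivity and linearity on projections} then gives
$$\Delta(a)=\sum_{j=1}^n \lambda_j \Delta(p_j)=\sum_{j=1}^n \lambda_j G(p_j)=G\!\left(\sum_{j=1}^n \lambda_j p_j\right)=G(a).$$
For an arbitrary $x\in M_n$, write $x=a+ib$ with $a,b\in (M_n)_{sa}$; Lemma \ref{l linearity on Msa} (applicable because $\Delta$ is a weak-2-local $^*$-derivation) gives $\Delta(x)=\Delta(a)+i\Delta(b)=G(a)+iG(b)=G(x)$. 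Hence $\Delta=G$ and in particular $\Delta$ is linear.

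Finally, once linearity is established, the weak-2-local hypothesis applied at $a=b$ shows that $\Delta$ is a weak-local derivation in the sense of \cite{BenAliPeraltaRamirez}: for every $a\in M_n$ and every $\phi\in M_n^*$, there exists a derivation $D_{a,a,\phi}$ with $\phi\Delta(a)=\phi D_{a,a,\phi}(a)$. Theorem 3.4 of \cite{BenAliPeraltaRamirez} then concludes that $\Delta$ is a derivation, and the $^*$-property is automatic from $\Delta^{\sharp}=\Delta$. The main obstacle in this blueprint is already absorbed into Proposition \ref{p boundedness of Delta} (the non-trivial boundedness on projections, avoiding the pathology of Remark \ref{r non boundedness of completely additive measure on PMn}); what remains to do for the theorem itself is essentially the bookkeeping above.
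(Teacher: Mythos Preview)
Your proposal is correct and follows essentially the same route as the paper: feed Propositions \ref{p finite additivity and linearity on projections} and \ref{p boundedness of Delta} into the Bunce--Wright--Mackey--Gleason theorem to get a linear extension $G$, use the spectral resolution together with Proposition \ref{p finite additivity and linearity on projections} to show $\Delta=G$ on $(M_n)_{sa}$, and then invoke Lemma \ref{l linearity on Msa} for full linearity. Your explicit separation of the case $n\leq 2$ (so that the no-type-$I_2$ hypothesis of \cite{BuWri92} is genuinely met) and your final appeal to \cite[Theorem~3.4]{BenAliPeraltaRamirez} to conclude that the now-linear $\Delta$ is a derivation are both a touch more careful than the paper's own write-up, but the argument is the same.
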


\begin{proof} Let $\Delta : M_n \to M_n$ be a weak-2-local $^*$-derivation. Propositions \ref{p finite additivity and linearity on projections} and \ref{p boundedness of Delta} assure that the mapping $\mu : \mathcal{P}(M_n) \to M_n$, $p\mapsto \mu (p):= \Delta (p)$ is a bounded completely additive measure on $\mathcal{P}(M_n)$. By the Mackey-Gleason theorem (cf. \cite{BuWri92}) there exists a bounded linear operator $G$ on $M_n$ such that $G(p)=\mu(p)=\Delta(p)$ for every $p\in \mathcal{P}(M_n)$.\smallskip

We deduce from the spectral resolution of self-adjoint matrices and Proposition \ref{p finite additivity and linearity on projections} that $\Delta (a) = G(a)$, for every $a\in (M_n)_{sa}$. Thus, given two self-adjoint elements $a,b$ in $M_n$, we have $$\Delta(a+b)=G(a+b)=G(a)+G(b)=\Delta(a)+\Delta(b).$$ This shows that $\Delta|_{(M_n)_{sa}}$ is a linear mapping. The linearity of $\Delta$ follows from Lemma \ref{l linearity on Msa}.
\end{proof}

\begin{corollary}\label{finite dimensional C*-algebras}
Every weak-2-local $^*$-derivation on a finite dimensional C$^*$-algebra is a derivation.
\end{corollary}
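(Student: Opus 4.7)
The plan is to reduce to the matrix case handled by Theorem \ref{t weak-2-local derivations on Mn are derivations} and then glue the pieces together along the central decomposition. Let $A = \bigoplus_{i=1}^k M_{n_i}$, and denote by $p_1, \ldots, p_k$ the minimal central projections, so that $p_i A p_i = p_i A \cong M_{n_i}$ and every $a \in A$ decomposes as $a = \sum_{i=1}^k a_i$ with $a_i := p_i a \in p_i A p_i$. I would establish two facts: (i) each restriction $\Delta|_{p_i A p_i}$ is a linear $^*$-derivation on $p_i A p_i$, and (ii) $\Delta$ is additive across the central summands, i.e. $\Delta(a) = \sum_{i=1}^k \Delta(a_i)$ for every $a \in A$. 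Together with the orthogonality relations $a_i b_j = 0$ for $i\neq j$, these two facts immediately upgrade $\Delta$ to a linear derivation on $A$.

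For (i), one first verifies that $\Delta(p_i A p_i) \subseteq p_i A p_i$. Every derivation $D$ on $A$ annihilates the central projection $p_i$ (because $D(p_i) = 2 p_i D(p_i)$ forces $p_i D(p_i) = 0$) and therefore maps $p_i A p_i$ into itself. Thus, for $a \in p_i A p_i$ and any $\phi \in A^*$ of the form $\phi = p_j \phi p_j$ with $j\neq i$, the weak-2-local hypothesis provides a derivation $D_{a,\phi}$ with $\phi \Delta(a) = \phi D_{a,\phi}(a) = 0$; since the span of such functionals over $j\neq i$ is a complement of $(p_i A p_i)^*$ inside $A^*$, this forces $\Delta(a) \in p_i A p_i$. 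Consequently $\Delta|_{p_i A p_i}$ coincides with the weak-2-local $^*$-derivation $p_i \Delta p_i|_{p_i A p_i}$ furnished by Proposition \ref{p restriction}, and Theorem \ref{t weak-2-local derivations on Mn are derivations} applied to $p_i A p_i \cong M_{n_i}$ completes (i).

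For (ii), I would first establish additivity on self-adjoint elements. Given self-adjoint $a_i \in p_i A p_i$, the spectral theorem in $M_{n_i}$ yields $a_i = \sum_t \lambda_{i,t} q_{i,t}$ with real $\lambda_{i,t}$ and $\{q_{i,t}\}_t$ mutually orthogonal projections inside $p_i A p_i$; because the $p_i$ themselves are pairwise orthogonal, the combined family $\{q_{i,t}\}_{i,t}$ is mutually orthogonal in $A$. Proposition \ref{p finite additivity and linearity on projections} then yields
\begin{equation*}
\Delta\!\left(\sum_i a_i\right) = \sum_{i,t} \lambda_{i,t}\, \Delta(q_{i,t}) = \sum_i \Delta(a_i).
\end{equation*}
For general $a = x + iy$ with $x,y \in A_{sa}$, decomposing $x$ and $y$ across the central summands and invoking Lemma \ref{l linearity on Msa} upgrades the self-adjoint identity to the desired $\Delta(a) = \sum_i \Delta(a_i)$. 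The only mildly delicate point in the argument is precisely this additivity across central summands: one cannot merely quote the main theorem summand by summand, but must funnel the computation through Proposition \ref{p finite additivity and linearity on projections}, exploiting the joint orthogonality of the combined family of spectral projections.
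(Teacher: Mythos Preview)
Your proof is correct and follows essentially the same strategy as the paper: reduce to the blocks $p_iAp_i\cong M_{n_i}$ via Proposition~\ref{p restriction} and Theorem~\ref{t weak-2-local derivations on Mn are derivations}, then recover global additivity from Proposition~\ref{p finite additivity and linearity on projections} on the joint spectral projections and Lemma~\ref{l linearity on Msa}. The one point where you deviate is the invariance step $\Delta(p_iAp_i)\subseteq p_iAp_i$: the paper argues via Lemma~\ref{p Delta(p) p} on each spectral projection $p_j\leq q_i$ (obtaining $(1-q_i)\Delta(p_j)(1-q_i)=0$) and then assembles the self-adjoint case through Proposition~\ref{p finite additivity and linearity on projections}, whereas you observe directly that every derivation on $A$ kills the central projection $p_i$ and hence preserves the summand, so the weak-2-local hypothesis immediately forces $p_j\Delta(a)p_j=0$ for $j\neq i$. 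Your route is slightly slicker here and avoids the detour through spectral resolutions for this particular step; otherwise the two arguments coincide.
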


\begin{proof}
Let $A$ be a finite dimensional C$^*$-algebra. It is known that $A$ is unital and there exists a finite sequence of mutually orthogonal central projections $q_1,\cdots, q_m$ in $A$ such that $A=\bigoplus_{i=1}^m Aq_i$ and $Aq_i\cong M_{n_i}(\mathbb{C})$ for some $n_i\in\mathbb{N}$ ($1\leq i\leq m$) (cf. \cite[Page 50]{Takesaki}). \smallskip

Let $\Delta$ be a weak-2-local $^*$-derivation on $A$. Fix $1\leq i\leq m.$ By Proposition \ref{p restriction} the restriction $q_i \Delta q_i|_{Aq_i}= \Delta q_i|_{Aq_i}:q_i A q_i= A q_i\rightarrow Aq_i$ is a weak-2-local $^*$-derivation. Since $Aq_i\cong M_{n_i}(\mathbb{C})$, Theorem \ref{t weak-2-local derivations on Mn are derivations} asserts that $\Delta q_i|_{Aq_i}$ is a derivation.
\smallskip

Let $a$ be a self-adjoint element in $A q_i.$ Then $a$ writes in the form $\displaystyle a=\sum_{j=1}^{k_i} \lambda_j p_j,$ where $p_1,\cdots,p_{k_i}$ are mutually orthogonal projections in $Aq_i$ and $\lambda_1,\cdots,\lambda_{k_i}$ are real numbers. Proposition \ref{p finite additivity and linearity on projections} implies that  $$\Delta(a)=\sum_{j=1}^{k_i} \lambda_j \Delta(p_j).$$ Multiplying on the right by the central projection $1-q_i$ we get:
\begin{equation}\label{eq 2502 2}
 \Delta(a)(1-q_i)=\sum_{j=1}^{k_i} \lambda_j \Delta(p_j)(1-q_i).
\end{equation}

However, Lemma \ref{p Delta(p) p} implies that $ (1-p_j)\Delta(p_j)(1-p_j)=0,$ for every $1\leq j\leq k_i$. Since $p_j\leq q_i$ for every $j,$ we have $1-q_i\leq 1-p_j,$ which implies that $0=(1-q_i)\Delta(p_j)(1-q_i)=\Delta(p_j)(1-q_i),$  for every $1\leq j\leq k_i$. We deduce from \eqref{eq 2502 2} that $\Delta(a)=\Delta(a)q_i = q_i \Delta(a) q_i$ for every self-adjoint element $a\in Aq_i.$ Lemma \ref{l linearity on Msa} shows that the same equality holds for every  $a\in Aq_i.$ That is, $\Delta (A q_i) \subseteq A q_i $ and $\Delta|_{A q_i}$ is linear for every $1\leq i\leq m$.\smallskip

Let $(a_i)$ be a self-adjoint element in $A$, where $a_i\in A q_i$. Having in mind that every $a_i$ admits a finite spectral resolution in terms of minimal projections and $A q_i \perp A q_j$, for every $i\neq j$, it follows from Corollary \ref{c to propo additivity on orthogonal projections} (or from Proposition \ref{p finite additivity and linearity on projections}) that $\Delta ((a_i)) = (\Delta (a_i)).$  Having in mind that $\Delta|_{A q_i}$ is linear for every $1\leq i\leq m$, we deduce that $\Delta$ is additive in the self-adjoint part of $A$. Lemma \ref{l linearity on Msa} shows that $\Delta$ is actually additive on the whole of $A.$
\end{proof}


\begin{thebibliography}{22}
\bibitem{AyuKuday2014} Sh. Ayupov, K.K. Kudaybergenov, $2$-local derivations on von
Neumann algebras, to appear in \emph{Positivity}. DOI 10.1007/s11117-014-0307-3.

\bibitem{AyuKudPe2014} S. Ayupov, K. Kudaybergenov, A.M. Peralta, A survey on local and 2-local derivations on C$^*$- and von Neuman algebras, preprint 2014. arXiv:1411.2711v1.


\bibitem{BenAliPeraltaRamirez} A. Ben Ali Essaleh, A.M. Peralta, M.I. Ram{\'\i}rez, Weak-local derivations and homomorphisms on C$^*$-algebras, to appear in \emph{Linear Multilinear A.}






\bibitem{BuWri92} L.J. Bunce, J.D.M. Wright,
The Mackey-Gleason problem, \emph{Bull. Amer. Math. Soc.} \textbf{26}, 288-293 (1992).


\bibitem{BurFerGarPe2014RACSAM} M. Burgos, F.J. Fern{\' a}ndez-Polo, J.J. Garc{\'e}s, A.M. Peralta, A Kowalski-S{\l}odkowski theorem for 2-local $^*$-homomorphisms on von Neumann algebras, to appear in \emph{RACSAM}. DOI 10.1007/s13398-014-0200-8.

\bibitem{BurFerGarPe2014JMAA} M. Burgos, F.J. Fern{\' a}ndez-Polo, J.J. Garc{\'e}s, A.M. Peralta, 2-local triple homomorphisms on von Neumann algebras and JBW$^*$-triples, \emph{J. Math. Anal. Appl.} \textbf{426}, 43-63 (2015).



\bibitem{Doro1990} S.V. Dorofeev, A problem of the boundedness of a signed measure defined on the projectors of a von Neumann algebra of type I. (Russian) \emph{Izv. Vyssh. Uchebn. Zaved. Mat.}, no. {3}, 67-69 (1990); translation in \emph{Soviet Math. (Iz. VUZ)} \textbf{34}, no. 3, 77-80  (1990).

\bibitem{Doro} S.V. Dorofeev, On the problem of boundedness of a signed measure on projections of a von Neumann algebra, \emph{J. Funct. Anal.} \textbf{103}, 209-216 (1992).





\bibitem{John96} B.E. Johnson, Symmetric amenability and the nonexistence of Lie and
Jordan derivations, \emph{Math. Proc. Cambridge Philos. Soc.}
\textbf{120}, no. 3, 455-473 (1996).

\bibitem{John01} B.E. Johnson, Local derivations on C$^*$-algebras are derivations,
\emph{Trans. Amer. Math. Soc.} \textbf{353}, 313-325 (2001).


\bibitem{Kad90} R.V. Kadison, Local derivations, \emph{J. Algebra} \textbf{130}, 494-509 (1990).


\bibitem{KimKim04} S.O. Kim, J.S. Kim, Local automorphisms and derivations on $\mathbb{M}_n$, \emph{Proc. Amer. Math. Soc.} \textbf{132}, no. 5, 1389-1392 (2004).



\bibitem{KOPR2014} {K.~K.~Kudaybergenov, T.~Oikhberg, A.~M.~Peralta, B.~Russo,} \textit{2-Local triple derivations on von Neumann algebras,} arXiv:1407.3878.









\bibitem{PeRu} A. M. Peralta and B. Russo, Automatic continuity of triple derivations on
C$^*$-algebras and JB$^*$-triples, \emph{J. Algebra}  \textbf{399}, 960-977  (2014).




\bibitem{Sak60} S. Sakai, On a conjecture of Kaplansky, \emph{Tohoku Math. J.}, \textbf{12}, 31-33 (1960).

\bibitem{Sak} S. Sakai, \emph{C$^*$-algebras and W$^*$-algebras}, Springer-Verlag, Berlin 1971.

\bibitem{Sak91} S. Sakai, \emph{Operator algebras in dynamical systems. The theory of unbounded derivations in C*-algebras.} Encyclopedia of Mathematics and its Applications, 41. Cambridge University Press, Cambridge 1991.


\bibitem{Semrl97}  P. \v{S}emrl, Local automorphisms and derivations on $B(H)$, \emph{Proc. Amer. Math. Soc.} \textbf{125}, 2677-2680 (1997).

\bibitem{Shers2008} A.N. Sherstnev, \emph{Methods of bilinear forms in noncommutative theory of measure and integral}, Moscow, Fizmatlit, 2008,
256 pp.





\bibitem{Takesaki} M. Takesaki, \emph{Theory of operator algebras I}, Springer-Verlag, Berlin 1979.

\bibitem{Wright1998} J.D.M. Wright, Decoherence functionals for von Neumann quantum histories: boundedness and countable additivity, \emph{Comm. Math. Phys.} \textbf{191}, no. 3, 493-500 (1998).

\end{thebibliography}
\end{document}